\def\Xint#1{\mathchoice
{\XXint\displaystyle\textstyle{#1}}%
{\XXint\textstyle\scriptstyle{#1}}%
{\XXint\scriptstyle\scriptscriptstyle{#1}}%
{\XXint\scriptscriptstyle\scriptscriptstyle{#1}}%
\!\int}
\def\XXint#1#2#3{{\setbox0=\hbox{$#1{#2#3}{\int}$}
\vcenter{\hbox{$#2#3$}}\kern-.5\wd0}}
\def\dashint{\Xint-}
\numberwithin{equation}{section}
\newtheorem{theorem}{Theorem}[section]
\newtheorem{lemma}[theorem]{Lemma}
\newtheorem{corollary}[theorem]{Corollary}
\newtheorem{definition}[theorem]{Definition}
\newtheorem{remark}[theorem]{Remark}
\newcommand{\re}{\mathbb{R}}
\newcommand{\he}{\mathbb{H}}
\newcommand{\na}{\mathbb{N}}
\newcommand{\setR}{\mathbb{R}}
\newcommand{\setN}{\mathbb{N}}
\DeclareMathOperator{\grad}{grad}
\DeclareMathOperator{\supp}{supp}
\DeclareMathOperator{\dv}{div}
\DeclareMathOperator{\sgn}{sgn}
\DeclareMathOperator{\tv}{TV}
\newcommand{\BV}{BV}
\DeclareMathOperator{\ric}{Ric}
\DeclareMathAlphabet{\mathpzc}{OT1}{pzc}{m}{it}
\DeclareMathOperator{\R}{\mathcal R}
\DeclareMathOperator{\vol}{vol}
\begin{document}

\title[Traces on manifolds applied to conservation laws]{Traces for functions of bounded variation on manifolds with applications to conservation laws on manifolds with boundary}

\author{%
  {%
    Dietmar Kr\"oner, Thomas M\"uller and Lena Maria Strehlau
  }
   \\[1em]%
   \vspace{0.3cm}%
   \\
  {%
    \scriptsize%
    Abteilung f\"ur Angewandte Mathematik, Universit\"at Freiburg,\\
    Hermann-Herder-Str. 10, D-79104 Freiburg, Germany. %
  }\\[1em]%
  {%
    \scriptsize%
    E\lowercase{mails}: \lowercase{\tt dietmar@mathematik.uni-freiburg.de, mueller@mathematik.uni-freiburg.de, lena-strehlau@gmx.de}
   }\\[1em]%
} 

\thanks{The work has been supported by
Deutsche Forschungsgemeinschaft via SFB/TR 71 `Geometric Partial
Differential Equations'. Furthermore, T.M. acknowledges the supported 
by the German National Academic Foundation (Studienstiftung des Deutschen Volkes). }



\begin{abstract}
In this paper we show existence of a trace for functions of bounded variation on Riemannian manifolds
with boundary.
The trace, which is bounded in $L^\infty$, is reached via $L^1$-convergence and allows an integration by parts formula.
We apply these results in order to show well-posedness and total variation estimates
for the initial boundary value problem for a scalar conservation law on compact Riemannian manifolds with boundary
in the context of functions of bounded variation via the vanishing viscosity method.
The flux function is assumed to be time-dependent and divergence-free.
\end{abstract}

\maketitle

\pagestyle{myheadings}
\thispagestyle{plain}
\markboth{Dietmar Kr\"oner, Thomas M\"uller, Lena Maria Strehlau}{Traces for BV-functions on manifolds with applications to conservation laws}

 \section{Introduction}
%
Numerous applications in continuum dynamics are modeled by hyperbolic conservation laws, often posed 
on surfaces or manifolds. Examples are the shallow water equations on a sphere, relativistic flows,
transport processes on interfaces or cell surfaces, just to mention a few.
If the physical domain contains a boundary or only a part of a larger closed manifold is of interest,
one has to consider initial boundary value problems on manifolds with boundary.

In this work we will show existence of a trace for BV functions on Riemannian manifolds with
boundary and conclude some properties in order to
prove existence and uniqueness in the space of BV functions, 
and total variation estimates for a solution $u:\bar M\times(0,T)\rightarrow\setR$ 
of the following problem:
\begin{align}
\partial_t u + \dv_g(f(u,x,t))&=0\ \ \text{in $M \times (0,T)$},\label{11}\\
u(\cdot, 0)&=u_0 \   \text{in $M$},\label{12}\\
u&=0 \quad \text{on the outflow part (cf. (\ref{outflow})) of } \partial M ,\label{13}
\end{align}
where $\bar M$ is a smooth, compact manifold with boundary and smooth Riemannian metric $g$, $M=\bar M\backslash \partial M$,
$f(u, \cdot, t)$ a family of smooth vector fields on $\bar M$ parametrized smoothly by
 $u \in \re$ and $t \in [0,T]$ 
with $|\partial_u f|_g \in L^\infty(\re \times M\times (0,T))$, $ (\dv_g f)(u,x,t)=0$ and $u_0 \in BV(M)$.\\

Let us briefly summarize related results. 
The theory on BV functions and their traces is well established for Euclidean domains.
For a comprehensive introduction to this subject we refer to \cite{Ambrosio,Evans/Gariepy,Giu84}.
Studying heat semigroups on Riemannian manifolds, parts of the theory for BV functions
have been generalized to Riemannian manifolds without boundary in \cite{Miranda}.
In particular, the heat semigroup allows an appropriate mollification of BV-functions in this setting.
BV functions in a more general understanding are studied by Vittone \cite{Vit12}.
He shows existence and some properties of a trace operator for BV functions defined on 
a special domain with compact boundary in Carnot-Carath\'eodory spaces
and where the bounded variation is defined with respect to a family of vector fields.

The theory on BV functions plays a central role in the study of conservation laws in Euclidean space.
Indeed, they form a natural solution space as the property of a bounded variation is conserved for scalar equations 
with respect to time evolution, and numerical analysis for these PDEs heavily relies on this regularity.
For material on conservation laws in the Euclidean case we refer to \cite{Daf00} and the references therein.
The initial boundary value problem for the Euclidean case was first solved by Bardos et al. \cite{BardosLeroux} exploiting
the fact that functions of bounded variation admit traces on the boundary. It was Otto \cite{Otto}, who established 
$L^\infty$-theory for the initial boundary value problem.
To our knowledge, conservation laws on manifolds were studied for the first time by \cite{Pan97} showing
existence and uniqueness for a geometry-independent variant of \eqref{11}.
For the case of compact manifolds without boundary
the theoretical and numerical study of conservation laws on manifolds has reached
significant progress \cite{BL07,ABL05, BFL09, ALN11,LON09,Dziuk/Kroener,MuellerLengeler,Gie09,GM14}
during the last decade. The Dirichlet problem for a geometry-independent formulation of
\eqref{11} was addressed by Panov using a kinetic formulation.\\

Our results on traces as well as on well-posedness for the initial boundary value problem seem to be new
in the context of Riemannian manifolds with boundary.
Notation and preliminaries are presented in Section \ref{sec:notation}.
In Section \ref{sec:traces} we show existence of a trace via a partition of unity.
This definition, based on local terms, turns out to be well-posed
and thus, independent of the choice of coordinates. A key result for the application to conservation laws is
the partial integration formula \eqref{trace}, which also guarantees uniqueness of the trace.
Section \ref{sec:ibvp} is devoted to the study of the initial boundary value problem. To this end
a parabolic regularization of \eqref{11} is considered, adding small viscosity.
Estimates for the solution of the regularized problem, uniform in the viscosity parameter, guarantee convergence of 
a subsequence to the entropy solution. Uniqueness is proved by transferring Kruzkov's doubling of variables to our setting.

 \section{Notation and preliminaries}
 \label{sec:notation}

In this section we give a short overview on Riemannian geometry, for a comprehensive introduction see e.g. \cite{doCarmo,LeeRM}.

Throughout the whole paper, let $\bar M$ be an $n$-dimensional, compact, oriented, smooth manifold with boundary $\partial M$. The inner of $\bar M$,
which is a manifold without boundary, we denote with $M:= \bar M \backslash \partial M$. Let $g := \langle\cdot, \cdot\rangle_g$ be a smooth Riemannian 
metric defined on $\bar M$ and  $\nabla^g= \nabla$ the associated Levi-Civita connection on the tangential bundle $T \bar M$. With $\Gamma(T \bar M)$ we denote the set of
differentiable vector fields on $\bar M$ and with $\Gamma_0(TM)$ the set of differentiable vector fields on $M$ of compact support contained in $M$.
The space of smooth $(r,s)$-tensor fields is denoted by $\Gamma(\mathcal T_s^r M)$. 
We call $(\bar M, g)$ a Riemannian manifold.
The pair $(\partial M, \widetilde g)$ where $\tilde g$ is the $g$-induced metric on $\partial M$, is an $(n-1)$-dimensional Riemannian manifold. The Riemannian distance
of two points $x,y\in\bar M$ will be denoted by $d_g(x,y)$ and the geodesic ball around $x$ with radius $\rho$ by  $B_\rho^g(x)$.
Using Einstein's summation convention we write for the scalar product of two tangential vectors $X,Y$ locally $\langle X,Y\rangle_g = g_{ij}X^iY^j$.
The summation convention will be used throughout the whole paper. For $(r,s)$-tensors $F,G$ we define
\begin{align*}
 \langle F, G\rangle_g := g^{i_1 k_1}  \dots g^{i_rk_r}  g_{j_1l_1}  \dots g_{j_sl_s}  F_{i_1\dots i_r}^{j_1\dots j_s} G_{k_1\dots k_r}^{l_1\dots l_s}.
\end{align*}
For the $g$-induced norm of a tensor $F$ we write $|F|_g:= \langle F,F \rangle_g^{\frac{1}{2}}$.
In local coordinates we write $X^i_{\, ;j}:= \partial_j X^i+X^k \Gamma_{jk}^i$ for the covariant derivative of a vector field $X$ with the Christoffel symbols $\Gamma_{jk}^i$.
By $\nabla^k$ we denote the $k$-fold application of $\nabla$.
We can associate each covector field with a vector field by lowering the index. E.g., the covariant vector field $\nabla u$ can be associated
with the vector field $\grad_g u$ by 
$(\nabla u)_i= u_{;i}=\partial_i u=g_{ij} g^{jk}\partial_k u=  g_{ij} (\grad_g u)^j= g_{ij}u_{;}^{\, j}$.
A generalization of the connection $\nabla$ for $(r,s)$-tensors $A \in  \Gamma(\mathcal T_s^r M)$  is given by
\begin{align*}
   \nabla: \Gamma(TM) \times \Gamma(\mathcal T_s^r M) &\rightarrow \Gamma(\mathcal T_s^r M)\\
  (X, A) &\mapsto  \nabla_X A
 \end{align*}
 with
 \begin{align*}
 (\nabla_X A)(\omega^1, \dots, \omega^r, Y_1, \dots Y_s):= &X(A(\omega^1, \dots, \omega^r, Y_1, \dots Y_s))\\
 &-\sum_{j=1}^r A (\omega^1, \dots, \nabla_X \omega^j, \dots, \omega^r, Y_1, \dots Y_s)\\
 &- \sum_{i=1}^s A(\omega^1, \dots, \omega^r, Y_1,\dots, \nabla_X Y_i \dots Y_s).
\end{align*}

For a smooth vector field $X$, we define $\dv_gX$ by
\begin{align*}
\int_M u \,\dv_g X\, dv_g = -\int_M \langle \grad_g u, X \rangle_g \, dv_g \, \forall u \in C_0^\infty(M)
\end{align*}
  where locally $dv_g =\sqrt{|g|} \ dz$ denotes the Riemannian volume element
  for positively oriented coordinates $z \in \re^n$ and $|g|:=\det(g_{ij})$.
In local coordinates this yields 
$\dv_g X= X^i_{\,; i}= \frac{1}{\sqrt{|g|}}\partial_i( X^i\sqrt{|g|})$. 
For an arbitrary $(r,0)$-tensor $\alpha$ we define
\begin{align*}
 \dv_g(\alpha)_{i_1\dots i_{r-1}}:= g^{jl}\nabla_j \alpha_{li_1 \dots i_{r-1}}= \nabla^j \alpha_{ji_1 \dots i_{r-1}}
\end{align*}
with $\nabla_j = \nabla_{\partial_j}$.
We define the Laplace-Beltrami $\Delta_gu$ for $u \in C^\infty(M)$ as
\begin{align*}
\Delta_g u := \dv_g(\grad_g u) 
\end{align*}
and in general
\begin{align*}
 \Delta_g =\dv_g \nabla=\text{trace} \nabla^2.
\end{align*}
For $u\in C^\infty(M)$ we define the commutator
\begin{align*}
 [\Delta_g, \nabla]u:= \Delta_g \nabla u- \nabla\Delta_g u.
\end{align*}
Let $\R$ denote the Riemannian curvature tensor, i.e.
\begin{align*}
 \R(X,Y)Z:= \nabla_X \nabla_Y Z- \nabla_Y \nabla_X Z-\nabla_{[X,Y]}Z
\end{align*}
for $X,Y,Z \in \Gamma(TM)$ and $\ric$ the Ricci tensor defined in local coordinates as
\begin{align*}
 \ric_{jk}:= \R_{ijk}^{\,\,\,\,\,\,\, i}.
\end{align*}
One can prove (cf. \cite{MuellerLengeler}) that in local coordinates
\begin{align*}
\ric_{jk} u_{;}^{\,k}= \R_{ijk}^{\,\,\,\,\,\,\, i}\, u_;^{\, k}= u_{;\, \, ji}^{\,i}-u_{;\, \, ij}^{\,i}
\end{align*}
and thus
\begin{align}\label{eq:commutator}
 [\Delta_g, \nabla]u= \Delta_g \nabla u- \nabla \Delta_g u= \ric(\nabla u, \cdot).
\end{align}
By $\vol_g(U):= \int_U 1 \, dv_g$  for a subset $U \subset M$ we define the volume measure on $M$ and, analogously,
by $\vol_{\widetilde g}$ the volume measure on $\partial M$.\\

At several points we will work with local coordinates. Note that a chart $\varphi:U \rightarrow V$ maps a portion $U \subset \bar M$ into the half space 
$\he^n := \{ (z^1,..., z^n) \in \re^n|\, z^n \geq 0 \}$. Of special use are Riemannian normal coordinates centered at some point $x \in U \subset M$ which are given by a chart $\varphi:U\mapsto\varphi(U)\subset\setR^n$
defined as the concatenation of the inverse of the exponential map $\exp_x$ with the isomorphism from the tangent space $T_xM$ to $\re^n$
induced by choosing a $g(x)-$orthonormal basis of $T_xM$.
In these coordinates we obtain in $\varphi(x)=(0, \dots, 0)$
\begin{align*}
g_{ij}=g^{ij}= \delta_{ij}, \, \partial_k g_{ij}=0\text{ and } \Gamma_{ij}^k =0.
\end{align*}
Furthermore, for every point $x \in \partial M$ there is a neighborhood $U \subset \bar M$ on which we can define geodesic boundary coordinates centered 
in $x \in \partial M$ (see \cite{Hsiung}).
On $\partial M\cap U$ we define Riemannian normal coordinates $(x^1, \dots, x^{n-1})$ related to $\tilde g$
and extend them by $x^n$ such that the $x^n$-curve is a geodesic on $M$ which is orthogonal to the $x^i$-curves.
We obtain for these coordinates
\begin{align*}
g_{nn}=g^{nn}=1 \text{ and } g_{ni}=g^{ni}=0 \text{ for } i=1, \dots, n-1 \quad \text{in }U
\end{align*}
and as immediate implications for $\tilde g$ and the unit outer normal $N$ on $\partial M \cap U$
\begin{align*}
N^n&=-1, \\
N^i&=0 \text{ for } i=1, \dots, n-1\text{ and }\\
\sqrt{|g|}&=\sqrt{|\widetilde g|}.
\end{align*}

\begin{lemma} \label{lemSaks}
Let $(\bar M,g)$ be a compact, oriented, smooth Riemannian manifold with boundary.
Then for $u\in C^\infty(\bar M)$ we have
 \begin{align*}
  \lim_{\eta \searrow 0} \int_{ \{x \in M |\, |u(x)| < \eta\}}  | \grad_g u(x)|_g \, dv_g =0.
 \end{align*}
 \end{lemma}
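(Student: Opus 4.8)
The plan is to reduce the statement to a standard fact about absolutely continuous functions on the real line via the coarea formula. Since $\bar M$ is compact and $u\in C^\infty(\bar M)$, the function $|\grad_g u|_g$ is bounded, say by $C$, and the set $\{x\in M:|u(x)|<\eta\}$ shrinks as $\eta\searrow 0$ to $\{x\in M:u(x)=0\}$. If this limit set had zero volume we would be done immediately by dominated convergence, but in general $u$ may vanish on a set of positive measure; however, on the interior of $\{u=0\}$ the gradient $\grad_g u$ vanishes identically, so the only contribution comes from the boundary of the zero set, which is where the coarea formula does the work.

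First I would invoke the coarea formula for the smooth function $u$ on the Riemannian manifold $(\bar M,g)$: for any nonnegative measurable $h$,
\begin{align*}
\int_M h(x)\,|\grad_g u(x)|_g\,dv_g=\int_{\setR}\Big(\int_{u^{-1}(t)}h\,d\mathcal H^{n-1}\Big)\,dt.
\end{align*}
Applying this with $h=\mathbf 1_{\{|u|<\eta\}}$ gives
\begin{align*}
\int_{\{|u|<\eta\}}|\grad_g u|_g\,dv_g=\int_{-\eta}^{\eta}\mathcal H^{n-1}\big(u^{-1}(t)\cap M\big)\,dt.
\end{align*}
Now I would argue that the integrand $\phi(t):=\mathcal H^{n-1}(u^{-1}(t)\cap M)$ is finite for a.e.\ $t$ — indeed $\int_{\setR}\phi(t)\,dt=\int_M|\grad_g u|_g\,dv_g<\infty$ by compactness — and hence the right-hand side tends to $0$ as $\eta\searrow 0$ by absolute continuity of the Lebesgue integral. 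This yields the claim.

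An alternative, more elementary route avoiding the coarea formula on manifolds: cover $\bar M$ by finitely many charts (using compactness) subordinate to a partition of unity, so that it suffices to prove the analogous statement for each localized piece, i.e.\ for a smooth function on a bounded open subset of $\he^n$ with the Euclidean coarea formula, or even more directly by applying Sard's theorem to find that for a.e.\ $\eta$ the level set $\{|u|=\eta\}$ is a smooth hypersurface and $\{|u|<\eta\}$ has smooth boundary, then splitting $\{|u|<\eta\}$ into the open set $\{u=0\}^\circ$ (on which $\grad_g u\equiv 0$) and its complement within $\{|u|<\eta\}$, whose measure tends to $0$.

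The main obstacle I anticipate is purely technical rather than conceptual: making sure the coarea formula is available in the stated Riemannian-with-boundary setting, or, if one prefers the chart-based argument, handling the partition of unity cleanly so that the cutoff functions do not spoil the ``$\grad u$ small where $u$ is small'' structure — but since the cutoffs are fixed smooth functions independent of $\eta$ and the bad set still shrinks to a subset of $\{u=0\}$, this causes no real difficulty. Everything else is dominated convergence and the absolute continuity of $t\mapsto\int_{-\eta}^\eta\phi$.
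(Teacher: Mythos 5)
Your main argument is correct, but it takes a genuinely different route from the paper: the paper disposes of this lemma in one line, reducing it via a partition of unity to the Euclidean statement of Bardos--LeRoux--N\'ed\'elec \cite[p.~1020, Lemma 2]{BardosLeroux}, whereas you prove it directly on the manifold. Your chain --- $\int_{\{|u|<\eta\}}|\grad_g u|_g\,dv_g=\int_{-\eta}^{\eta}\mathcal H^{n-1}\bigl(u^{-1}(t)\cap M\bigr)\,dt$ by the coarea formula, the level-area function $\phi(t)=\mathcal H^{n-1}(u^{-1}(t)\cap M)$ lies in $L^1(\re)$ because its total integral equals $\int_M|\grad_g u|_g\,dv_g<\infty$ on the compact manifold, and hence $\int_{-\eta}^{\eta}\phi\,dt\to0$ by absolute continuity of the integral --- is complete modulo the standard validity of the coarea formula for smooth (or Lipschitz) functions on Riemannian manifolds, which, as you note, follows from the Euclidean version in charts; the boundary is harmless since it is a $dv_g$-null set. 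What your approach buys is a short self-contained proof that works at the level of the level-set area function; what the paper's buys is brevity, outsourcing the analytic content to the cited Euclidean lemma and keeping only the (routine) localization.

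One caveat, which does not affect the verdict because it concerns only your closing aside: the ``more elementary'' splitting of $\{|u|<\eta\}$ into $\{u=0\}^\circ$ and its complement is not correct as stated, since the complement need not have measure tending to zero --- the zero set of a smooth function can have empty interior yet positive measure (e.g.\ a function vanishing precisely on a fat Cantor set). What rescues that route is the Saks-type fact that $\grad_g u=0$ at almost every point of $\{u=0\}$, after which dominated convergence applied to $\mathbf 1_{\{0<|u|<\eta\}}|\grad_g u|_g$ finishes the proof; this is in fact the mechanism behind the Euclidean lemma the paper cites. Your primary coarea argument does not rely on this aside and stands on its own.
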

 \begin{proof}
 The claim follows from \cite[p. 1020, Lemma 2]{BardosLeroux}  via a partition of unity.
 \end{proof}

\begin{lemma} [Lebesgue's Theorem on manifolds] \label{lemLebesgueM}
 Let $ u\in L^1(M)$. Then we have for almost every $x \in M$

\begin{align}
&\lim_{\rho\searrow 0} \dashint_{B_\rho^g(x)} |u(x)-u(y)|\,  dv_g(y) =0\, \text{and} \label{lebesgue1}\\
&\lim_{\rho \searrow 0} \dashint_{B_\rho^g(x)} u(y) \, dv_g(y) =u(x). \label{lebesgue2}
\end{align}

\end{lemma}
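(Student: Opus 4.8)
The plan is to reduce the statement to the classical Lebesgue differentiation theorem in Euclidean space by working in local charts, using a partition of unity subordinate to a finite atlas (which exists since $\bar M$ is compact). The key observations are that (i) in any chart the Riemannian volume measure $dv_g = \sqrt{|g|}\,dz$ is mutually absolutely continuous with respect to Lebesgue measure with a smooth, bounded, bounded-away-from-zero density, and (ii) geodesic balls $B_\rho^g(x)$ are comparable to Euclidean balls in the following sense: for $x$ in a fixed compact coordinate patch there are constants $c, C > 0$ (depending only on the patch) such that $B_{c\rho}^{\mathrm{eucl}}(\varphi(x)) \subset \varphi(B_\rho^g(x)) \subset B_{C\rho}^{\mathrm{eucl}}(\varphi(x))$ for all small $\rho$. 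This comparability follows from the fact that the identity map between $(U, g)$ and $(U, \delta)$ is bi-Lipschitz on compact subsets, since $g$ is a smooth metric.

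First I would fix a finite cover of $\bar M$ by charts $\varphi_k : U_k \to V_k$ and a subordinate partition of unity $\{\psi_k\}$; it suffices to prove the two limits for each $u \psi_k$ separately, so we may assume $u$ is supported in a single chart $U = U_k$ and work with its pushforward $\tilde u := (u \circ \varphi^{-1}) \sqrt{|g| \circ \varphi^{-1}} \in L^1(V, dz)$ — or more cleanly, treat $u \circ \varphi^{-1}$ as an $L^1$ function against the measure $\mu := \varphi_*(dv_g)$, which has smooth positive density with respect to $dz$. By the classical Lebesgue differentiation theorem (the version valid for the measure $\mu$, or equivalently for Lebesgue measure after absorbing the density), for $\mu$-almost every point $z$ one has $\rho^{-n}\int_{B_\rho^{\mathrm{eucl}}(z)} |u\circ\varphi^{-1}(w) - u\circ\varphi^{-1}(z)|\,d\mu(w) \to 0$ and the analogous averaged-value statement. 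Pulling back, the exceptional set has $dv_g$-measure zero, and this is where assertion (ii) enters: because $B_{c\rho}^{\mathrm{eucl}} \subset \varphi(B_\rho^g(x)) \subset B_{C\rho}^{\mathrm{eucl}}$ and $\mu(B_r^{\mathrm{eucl}})$ is comparable to $r^n$ (density bounded above and below), the Riemannian average $\dashint_{B_\rho^g(x)}$ is sandwiched between constant multiples of Euclidean averages over comparable balls, so the Euclidean limit forces the Riemannian limit.

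Concretely, for \eqref{lebesgue1} one writes
\[
\dashint_{B_\rho^g(x)} |u(x) - u(y)|\,dv_g(y) \le \frac{\mu(B_{C\rho}^{\mathrm{eucl}}(\varphi(x)))}{\mu(\varphi(B_\rho^g(x)))}\,\dashint_{B_{C\rho}^{\mathrm{eucl}}(\varphi(x))} |u\circ\varphi^{-1}(\varphi(x)) - u\circ\varphi^{-1}(w)|\,d\mu(w),
\]
and the prefactor stays bounded by $(C/c)^n$ times the oscillation of the density. The right-hand average tends to zero for $\mu$-a.e.\ — hence $dv_g$-a.e.\ — base point by the classical theorem applied to the measure $\mu$. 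The same sandwiching handles \eqref{lebesgue2}, noting in addition that $(\ref{lebesgue2})$ follows from $(\ref{lebesgue1})$ by the triangle inequality $\big|\dashint_{B_\rho^g(x)} u\,dv_g - u(x)\big| \le \dashint_{B_\rho^g(x)} |u(y) - u(x)|\,dv_g(y)$. Finally one intersects the countably many exceptional sets from the finitely many charts; their union is still $dv_g$-null, giving the conclusion for a.e.\ $x \in M$.

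The main (minor) obstacle is purely technical: making precise the ball-comparability constants and checking that the classical Lebesgue differentiation theorem applies to the weighted measure $\mu$ rather than plain Lebesgue measure. The latter is standard — $\mu$ is a Radon measure with continuous positive density, so Vitali covering works verbatim, or one simply notes $\int |f|\,d\mu = \int |f| \sqrt{|g|}\,dz$ and applies the unweighted theorem to $f\sqrt{|g|}$ together with continuity of $\sqrt{|g|}$. No genuinely new idea is needed beyond the observation that on a compact manifold everything is uniformly comparable to the Euclidean picture in finitely many charts.
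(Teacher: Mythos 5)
Your proof is correct and follows essentially the same route as the paper, which simply invokes a partition of unity to reduce to the Euclidean Lebesgue differentiation theorem; you merely spell out the details (chart-wise localization, comparability of geodesic and Euclidean balls, and the bounded positive density $\sqrt{|g|}$) that the paper leaves implicit.
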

\begin{proof}
The proof for the Euclidean case can be repeated after a partition of unity.
\end{proof}
At several places we will make use of functions 
$R_\delta: \bar M \xrightarrow {C^\infty} [0,1]$ for $\delta>0$
that are only supported in a small neighborhood of $\partial M$, precisely we 
require
\begin{equation}\label{rhodelta}
 \begin{aligned}
R_\delta &\equiv 0\, \text{ in $M_\delta:= \{x \in \bar M|\,  d_g(x,\partial M) > \delta\}$},\\
R_\delta &\equiv 1 \, \text{ in $ \{ x \in \bar M | \, d_g(x, \partial M) \leq \frac{\delta}{2}\}$}.
\end{aligned}
\end{equation}
\begin{lemma}\label{lemRadonMeasure}
 For a positive finite measure $\mu$ on $M$ we have 
 \begin{align}
  \lim_{\delta\searrow 0} \int_M R_\delta \ d\mu  = 0
 \end{align}
\end{lemma}
\begin{proof}
 Without loss of generality we can consider a monotone, positive sequence $(\delta_n)_{n\in\setN}$ 
 with $\delta_n\searrow 0$ for $n\rightarrow \infty$. 
 For $A_n:= M\backslash M_{\delta_n}$ and the disjoint sets $B_n:= A_{n}\backslash A_{n+1}$
 we obtain from the $\sigma$-additivity and finiteness of $\mu$ 
 \begin{equation}
  \sum_{l=1}^{k} \mu(B_l) = \mu(A_1) - \mu(A_{k}) \leq \mu(A_1) <\infty.
 \end{equation}
The positivity of $\mu$ yields convergence of the sequence $\sum_{l=1}^{k} \mu(B_l)$
towards $\mu(A_1)$ for $k\rightarrow\infty$
and hence
\begin{equation*}
 \lim_{k\rightarrow\infty}\mu(A_k)=\mu(A_1)-\lim_{k\rightarrow\infty}\sum_{l=1}^{k} \mu(B_l) =0
\end{equation*}
which completes the proof.
\end{proof}

%
%
%


 \section{Traces for functions of bounded variation on manifolds}
\label{sec:traces}
 
In this section we will show the existence and fundamental properties of traces  for functions of bounded variation on manifolds. The key result of this section is Theorem
\ref{SpursatzM}.
Analogous results for the Euclidean case are given in \cite[pp. 176-183]{Evans/Gariepy}.\\

\begin{definition}[$\BV$ functions on manifolds]
The total variation of a function  $u \in L^1(M)$ on $M$ is defined as 
\begin{align*}
\tv(u,M):= \sup\left\{ \int_M u\, \dv_g X \, dv_g| \, X \in \Gamma_0(T M), |X|_g \leq 1\right\}.
\end{align*}
For smooth functions $u:M \rightarrow \re$ we have
\begin{align*}
\tv(u,M):= \int_M |\grad_g u|_g \, dv_g.
\end{align*}
We define the set of functions of bounded variation on $M$ as

\begin{align*}
\BV(M):= \{ u \in L^1(M)| \tv(u,M) < \infty\}.
\end{align*}

\end{definition}
For the proof of Theorem \ref{SpursatzM} we will use the notation of the following Lemma from \cite[p. 167, Theorem 1]{Evans/Gariepy}.

\begin{lemma} \label{thmStructure}
 Let  $V\subset \re^n$ be open, $h \in \BV_{\text{loc}}(V)$. Then there exist a Radon measure $|Dh|$ on $V$ and a 
 $|Dh|$-measurable function  $\sigma_h:V \rightarrow \re^n$, such that
 \begin{align*}
         |\sigma_h|&=1\,\ \text{ $|Dh|$-almost everywhere and }\\
   \int_V h \, \dv\phi\, dz&= - \int_V \langle \phi, \sigma_h\rangle \, d|Dh|\text{ for all $\phi \in C_0^\infty(V, \re^n)$},
  \end{align*}
  where $\langle\cdot,\cdot\rangle$ denotes the standard Euclidean scalar product in $\setR^n$.
\end{lemma}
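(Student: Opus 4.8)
The plan is to obtain $|Dh|$ and $\sigma_h$ from the Riesz representation theorem applied to the linear functional on vector fields that is induced by distributional differentiation of $h$. First I would set
\[
L(\phi) := -\int_V h\,\dv\phi\,dz, \qquad \phi \in C_0^\infty(V;\re^n),
\]
which is clearly linear. The essential point, coming directly from $h \in \BV_{\loc}(V)$, is that $L$ is \emph{locally} bounded: given a compact $K \subset V$, pick an open set $V'$ with $K \subset V' \Subset V$; then for every $\phi \in C_0^\infty(V;\re^n)$ with $\supp\phi \subset K$ and $\phi \not\equiv 0$ the rescaled field $\phi/\sup_V|\phi|$ is admissible in the supremum defining $\tv(h,V')$, so $|L(\phi)| \le \tv(h,V')\,\sup_V|\phi| =: C_K\,\sup_V|\phi| < \infty$.

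Next I would extend $L$ from $C_0^\infty(V;\re^n)$ to $C_c(V;\re^n)$. Smooth, compactly supported fields are dense in $C_c(V;\re^n)$ with respect to uniform convergence of sequences whose supports stay in a fixed compact set (mollification), and the local bounds just established make this extension unique and keep it locally bounded. Then I would run the Riesz--Markov construction of the variation measure: for open $U \subset V$ put
\[
\mu(U) := \sup\bigl\{\, L(\phi) : \phi \in C_c(U;\re^n),\ \sup_V|\phi| \le 1 \,\bigr\},
\]
note $\mu(U) \le \tv(h,U) < \infty$ for $U \Subset V$, verify countable additivity after the usual Carath\'eodory extension to Borel sets, and set $|Dh| := \mu$. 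By construction $|L(\phi)| \le \int_V |\phi|\,d|Dh|$, so $L$ extends to a bounded functional on $L^1(V,|Dh|;\re^n)$; the duality $\bigl(L^1(|Dh|;\re^n)\bigr)^* = L^\infty(|Dh|;\re^n)$ then yields a $|Dh|$-measurable $\sigma_h$ with $\|\sigma_h\|_{L^\infty} \le 1$ and $L(\phi) = \int_V \langle\phi,\sigma_h\rangle\,d|Dh|$ for all $\phi \in C_c(V;\re^n)$, in particular for all $\phi \in C_0^\infty(V;\re^n)$, which is the claimed integration-by-parts identity.

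It remains to show $|\sigma_h| = 1$ holds $|Dh|$-almost everywhere. The inequality $|\sigma_h| \le 1$ is immediate from $\|\sigma_h\|_{L^\infty}\le 1$. For the reverse, if $|\sigma_h| < 1$ on a Borel set $A$ with $|Dh|(A) > 0$, then approximating $\sigma_h\mathbf 1_U$ (for a suitable open $U$ with $|Dh|(U\cap A)>0$) by fields $\phi \in C_c(U;\re^n)$ with $\sup_V|\phi|\le 1$ and testing the supremum that defines $\mu=|Dh|$ would give $|Dh|(U) = \int_U \langle\sigma_h,\sigma_h\rangle\,d|Dh| = \int_U |\sigma_h|^2\,d|Dh| < |Dh|(U)$, a contradiction. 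The one genuinely delicate ingredient here is the vector-valued Riesz representation that simultaneously produces the Radon measure $|Dh|$ and the unit-modulus polar factor $\sigma_h$ — precisely the statement quoted from \cite[p.~167, Theorem~1]{Evans/Gariepy} — so in the write-up I would either invoke that result directly or reproduce it along the lines sketched above.
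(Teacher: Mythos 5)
The paper offers no argument for this lemma at all: it is quoted verbatim from Evans--Gariepy (p.~167, Theorem~1) precisely so that it can be used as a black box in the proof of Theorem \ref{SpursatzM}. Your sketch is, in substance, the standard proof of that quoted theorem: local boundedness of $L(\phi)=-\int_V h\,\dv\phi\,dz$ on fields supported in a fixed compact set (which is exactly what $h\in\BV_{\loc}(V)$ gives), the Riesz--Markov construction of the variation measure, and $L^1$--$L^\infty$ duality for the polar $\sigma_h$. So you are reproducing the cited reference rather than taking a different route, and invoking the result directly -- your fallback option -- is exactly what the paper does.

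One step is written incorrectly, though the fix is immediate. In your argument that $|\sigma_h|=1$ holds $|Dh|$-a.e., approximating the \emph{unnormalized} field $\sigma_h\mathbf{1}_U$ by admissible $\phi$ with $\sup|\phi|\le1$ only yields $|Dh|(U)\ge\int_U|\sigma_h|^2\,d|Dh|$, not the equality $|Dh|(U)=\int_U|\sigma_h|^2\,d|Dh|$ you assert; the inequality is perfectly compatible with $|\sigma_h|<1$ on a set of positive measure, so no contradiction results as written. The clean argument needs no approximation in that direction: for every $\phi\in C_c(U;\re^n)$ with $|\phi|\le1$ one has $L(\phi)=\int_U\langle\phi,\sigma_h\rangle\,d|Dh|\le\int_U|\sigma_h|\,d|Dh|$, hence $|Dh|(U)\le\int_U|\sigma_h|\,d|Dh|\le|Dh|(U)$, which together with $|\sigma_h|\le1$ forces $|\sigma_h|=1$ $|Dh|$-almost everywhere on $U$; exhausting $V$ by open sets $U\Subset V$ finishes the claim. (Equivalently, approximate the normalized field $\sigma_h/|\sigma_h|$ rather than $\sigma_h$ itself.)
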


 An analogous result for manifolds  is given in \cite[p. 104]{Miranda}:

\begin{lemma} \label{mppp}
For a function $u \in \BV(M)$ there exist a finite measure  $|Du|$ on $M$ and a $|Du|$-measurable function
$\sigma_u:M \rightarrow TM$ such that
 
\begin{align*}
 |\sigma_u|_g=1\, \text{ $|Du|$-almost everywhere and}
\end{align*}
\begin{align*}
\int_M u \, \dv_g X\, dv_g = - \int_M \langle \sigma_u, X\rangle_g \, d|Du|\text{ for all $X\in \Gamma_0(T M)$.}
\end{align*}

\end{lemma}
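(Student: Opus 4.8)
The plan is to realize the linear functional $X\mapsto\int_M u\,\dv_g X\,dv_g$ as a finite $TM$-valued measure and then to take its polar decomposition. First I would observe that, directly from the definition of $\BV(M)$ and by homogeneity in $X$, the map $L\colon\Gamma_0(TM)\to\setR$, $L(X):=\int_M u\,\dv_g X\,dv_g$, is linear and satisfies $|L(X)|\le\tv(u,M)\,\sup_{x\in M}|X(x)|_g$. Since $\Gamma_0(TM)$ is dense in the sup-norm in $C_0(M;TM)$, the space of continuous vector fields on $M$ vanishing on $\partial M$, the functional $L$ extends uniquely to a bounded linear functional on $C_0(M;TM)$ of norm at most $\tv(u,M)$.

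Next I would invoke a Riesz--Markov representation theorem for continuous sections of the bundle $TM$ equipped with the fibre metric $g$: there is a finite Radon measure $|Du|$ on $M$, with total mass equal to $\|L\|$, together with a $|Du|$-measurable section $\sigma_u$ of $TM$ satisfying $|\sigma_u|_g=1$ for $|Du|$-a.e.\ $x$, such that $L(X)=-\int_M\langle\sigma_u,X\rangle_g\,d|Du|$ for every $X\in C_0(M;TM)$, in particular for every $X\in\Gamma_0(TM)$ (the minus sign is absorbed into $\sigma_u$). This is exactly the assertion of the Lemma, and finiteness of $|Du|$ is automatic since $|Du|(M)=\|L\|\le\tv(u,M)<\infty$.

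To make the representation step concrete one can reduce it to the Euclidean statement of Lemma \ref{thmStructure}: cover the compact manifold $\bar M$ by a finite atlas $(U_k,\varphi_k)_{k=1}^N$ with a subordinate smooth partition of unity $(\chi_k)$. In a single chart, using $dv_g=\sqrt{|g|}\,dz$ and $\dv_g X=\frac{1}{\sqrt{|g|}}\partial_i(X^i\sqrt{|g|})$, the substitution $\phi^i:=X^i\sqrt{|g|}$ identifies $\int u\,\dv_g X\,dv_g$ with the Euclidean integral $\int\tilde u\,\dv\phi\,dz$, where $\tilde u:=u\circ\varphi_k^{-1}$; since $\sqrt{|g|}$ is bounded above and below on $\overline{U_k}$, one gets $\tilde u\in\BV_{\text{loc}}(\varphi_k(U_k))$, so Lemma \ref{thmStructure} supplies a local measure and a local unit density. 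These local data all represent the one intrinsic functional $L$, hence are compatible on overlaps, and combining them against the $\chi_k$ yields a global measure $|Du|$ and a global section $\sigma_u$; the normalization $|\sigma_u|_g=1$ is preserved because it is invariant under orthonormal changes of frame.

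The step I expect to require the most care is this passage from the local Euclidean structure theorems to a single well-defined global pair $(|Du|,\sigma_u)$: one must check that the pulled-back total-variation measures coincide on chart overlaps (which follows from uniqueness in the Riesz representation applied to $L$ restricted to vector fields supported in the overlap) and that the partition-of-unity bookkeeping disturbs neither the unit length of $\sigma_u$ nor the finiteness of $|Du|$. Everything else is a routine transcription of the Euclidean arguments in \cite{Evans/Gariepy}.
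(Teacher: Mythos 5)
Your argument is correct in outline, but it is worth noting that the paper does not prove this lemma at all: it is quoted verbatim from Miranda et al.\ \cite{Miranda}, where it is obtained from the theory of BV functions on manifolds developed there. What you propose is essentially the standard proof that reference (and the Euclidean prototype, Lemma \ref{thmStructure}) relies on: the map $X\mapsto\int_M u\,\dv_g X\,dv_g$ is bounded by $\tv(u,M)\sup_M|X|_g$, extends to the closure of $\Gamma_0(TM)$ in the sup-norm, and a Riesz--Markov representation for continuous sections of $TM$ (or, equivalently, a chart-by-chart reduction to Lemma \ref{thmStructure} glued by uniqueness of the representation on overlaps) yields the finite measure $|Du|$ and the unit density $\sigma_u$. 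So your route supplies a self-contained argument where the paper defers to the literature, and it is compatible with how the same local substitution $\phi^i=X^i\sqrt{|g|}$ is exploited later in the proof of Theorem \ref{SpursatzM}. Two small points deserve more care than your sketch gives them. First, the sup-norm bound in a chart needs not only $\frac{1}{c}\le\sqrt{|g|}\le c$ but also the comparability $\frac{1}{c'}|z|\le|z|_g\le c'|z|$ on compact subsets, exactly as the paper records it, since you must control the Euclidean sup-norm of $\phi$ by $\sup|X|_g$ and vice versa. Second, the Euclidean structure theorem produces a density $\sigma_{\tilde u}$ of Euclidean length one, and chart coordinates are not orthonormal frames; to land on the statement of the lemma you must raise the index with $g^{ij}$, renormalize $\sigma$ to have $|\sigma_u|_g=1$, and absorb the factor $|g^{ij}\sigma_i\partial_j|_g\sqrt{|g|}$ into the measure, so the identity ``$|\sigma_u|_g=1$ is preserved under orthonormal changes of frame'' is not by itself the justification. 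Both adjustments are routine, so I regard your proposal as a correct alternative proof rather than a reproduction of the paper's (nonexistent) one.
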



\begin{theorem} [Traces for functions of bounded variation on manifolds] \label{SpursatzM}
There exists a unique linear operator  
\begin{align*}
 T:\BV(M)\rightarrow L^1(\partial M; dv_{\widetilde g})
\end{align*}
where $dv_{\widetilde g}$ denotes the $(n-1)$-dimensional Riemannian volume element on $\partial M$ such that
\begin{align}
\int_M u\, \dv_g \, X \, dv_g= -\int_M \langle X , \sigma_u \rangle_g \, d|Du|+ \int_{\partial M} \langle X, N \rangle_g\, Tu \, dv_{\widetilde g} \label{trace}
\end{align}
for all $u\in \BV(M)\cap L^\infty(M)$ and all $X \in \Gamma(T\bar M)$, where $|Du|$ and $\sigma_u$ are defined as
in  Lemma \ref{mppp} and $N$ denotes the unit outer normal.

\end{theorem}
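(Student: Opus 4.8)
## Proof Strategy

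\textbf{Approach.} The plan is to reduce the global statement to the Euclidean trace theorem (as in \cite[pp.~176--183]{Evans/Gariepy}) by means of a partition of unity subordinate to a finite atlas of $\bar M$, using geodesic boundary coordinates near $\partial M$ and ordinary charts in the interior. The trace will be \emph{defined} via these local pieces, and the bulk of the work is to show (i) the local definitions glue to a well-defined global operator $T$ independent of the chosen atlas and partition of unity, (ii) $T$ is linear and bounded into $L^1(\partial M; dv_{\widetilde g})$, and (iii) the integration-by-parts identity \eqref{trace} holds, which in turn forces uniqueness.

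\textbf{Key steps.} First I would fix a finite open cover $\{U_k\}$ of $\bar M$: for each $x \in \partial M$ choose $U_k$ carrying geodesic boundary coordinates (so that $g_{nn}=1$, $g_{ni}=0$, $\sqrt{|g|}=\sqrt{|\widetilde g|}$ on $U_k$, and $N = -\partial_n$ on $\partial M \cap U_k$), and cover the compact set $M_\delta$ for small $\delta$ by finitely many interior charts; then take a smooth partition of unity $\{\psi_k\}$ subordinate to this cover. For $u \in BV(M)\cap L^\infty(M)$ and a boundary chart $\varphi_k : U_k \to V_k \subset \he^n$, the function $(\psi_k u)\circ \varphi_k^{-1}$ lies in $BV(\he^n)$ with compact support (using that multiplication by a smooth compactly supported function preserves $BV$, via Lemma \ref{mppp}), so the Euclidean trace theorem gives a trace $t_k \in L^1(\partial \he^n)$; pulling back and summing, set $Tu := \sum_k (t_k \circ \varphi_k) $ on $\partial M$, noting interior charts contribute nothing. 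Second, to verify \eqref{trace}: for $X \in \Gamma(T\bar M)$ write $X = \sum_k \psi_k X$, transfer each term to the corresponding half-space chart where $\dv_g X$ becomes $\tfrac{1}{\sqrt{|g|}}\partial_i(X^i\sqrt{|g|})$, apply the Euclidean integration-by-parts/trace identity for $BV$ functions there, and reassemble; the boundary term on $\partial\he^n$ produces exactly $\langle X, N\rangle_g\, Tu\, dv_{\widetilde g}$ because in geodesic boundary coordinates $dv_{\widetilde g} = \sqrt{|\widetilde g|}\,dx' = \sqrt{|g|}\,dx'$ and $\langle X, N\rangle_g = -X^n$. Third, well-posedness of the definition: if \eqref{trace} holds for one choice of atlas/partition, then for any two candidate traces $T_1 u, T_2 u$ the difference satisfies $\int_{\partial M}\langle X, N\rangle_g (T_1 u - T_2 u)\, dv_{\widetilde g}=0$ for all $X \in \Gamma(T\bar M)$; since $\langle X, N\rangle_g|_{\partial M}$ ranges over all of $C^\infty(\partial M)$, this forces $T_1 u = T_2 u$ a.e., giving both independence of choices and uniqueness. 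Linearity is inherited from the Euclidean trace operator; the $L^1$-bound follows from the Euclidean estimate $\|t_k\|_{L^1(\partial\he^n)} \le C_k\, \|(\psi_k u)\circ\varphi_k^{-1}\|_{BV}$ together with the comparability of Euclidean and Riemannian quantities on each relatively compact chart, summed over the finite cover.

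\textbf{Main obstacle.} The delicate point is the gluing/consistency argument: one must check that the locally defined traces are genuinely compatible on chart overlaps and that the resulting $Tu$ does not depend on the partition of unity. I expect the cleanest route is not to compare local traces directly but to first establish the integration-by-parts formula \eqref{trace} for the \emph{particular} construction, and then use the density of $\{\langle X,N\rangle_g|_{\partial M} : X \in \Gamma(T\bar M)\}$ in, say, $C^\infty(\partial M)$ (indeed it is all of it, by extending any smooth function on $\partial M$ to a vector field normal to the boundary) to deduce uniqueness, which retroactively makes the construction canonical. A secondary technical nuisance is ensuring $(\psi_k u)\circ\varphi_k^{-1} \in BV$ with the correct total variation measure — this uses the Leibniz rule for $BV$ times smooth, i.e. $D(\psi_k u) = \psi_k\, Du + u\, \grad_g \psi_k\, dv_g$ in the sense of measures, which is exactly the content available from Lemma \ref{mppp} and the definition of $\tv$.
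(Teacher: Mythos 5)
Your strategy is essentially the paper's: localize near the boundary with a partition of unity subordinate to charts in geodesic boundary coordinates, apply the Euclidean $BV$ trace theorem of Evans--Gariepy chart by chart (using $\sqrt{|g|}=\sqrt{|\widetilde g|}$ and $\langle X,N\rangle_g=-X^n$ there), define $Tu$ as the sum of the pulled-back local traces, and obtain uniqueness from \eqref{trace} by testing with vector fields of the form (cutoff)$\cdot\phi\cdot$(extension of $N$), which is exactly how the paper argues. The one genuine organizational difference lies in the interior term: you decompose $X=\sum_k\psi_k X$ over a global atlas and must then reassemble the chart-level interior integrals $-\int_{V_k}\langle\phi_k,\sigma_{\bar u_k}\rangle\,d|D\bar u_k|$ into the global quantity $-\int_M\langle X,\sigma_u\rangle_g\,d|Du|$, i.e.\ identify the local Euclidean vector measures (including the $u\,\grad_g\psi_k$ Leibniz terms, which cancel only after summation) with the intrinsic $\sigma_u\,|Du|$ of Lemma \ref{mppp} --- precisely the gluing burden you flag as the main obstacle. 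The paper avoids this entirely by first splitting $X$ into $XR_\delta+X(1-R_\delta)$ with the boundary cutoff $R_\delta$ of \eqref{rhodelta}: the term with $X(1-R_\delta)\in\Gamma_0(TM)$ is handled globally by Lemma \ref{mppp} and Lemma \ref{lemRadonMeasure}, while in the collar charts all interior contributions ($S_1^\delta$, and the spurious chart-boundary term $S_3^\delta$, which your sketch does not mention but which is controlled by the $L^\infty$ bound on the Euclidean trace) vanish as $\delta\searrow0$, so only the $\partial\he^n$ boundary terms survive and no measure identification is ever needed. Your route is workable --- the Leibniz rule $D(\psi_k u)=\psi_k\,Du+u\,\grad_g\psi_k\,dv_g$ together with $\sum_k\grad_g\psi_k=0$ does close the gap --- but it costs an extra coordinate-invariance computation that the $R_\delta$ device renders unnecessary; conversely, your version yields \eqref{trace} in one pass without the $\delta$-limit bookkeeping.
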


\begin{proof}
Let $u\in \BV(M)$ and $X \in \Gamma(T\bar M)$. 
We can write
\begin{align}
\int_M u\, \dv_g X \, dv_g= \int_M u\, \dv_g (XR_\delta) \, dv_g+
\int_M u\, \dv_g (X(1-R_\delta)) \, dv_g \label{T1T2}
\end{align}
with $R_\delta$ as in (\ref{rhodelta}).
The application of Lemma \ref{mppp} yields existence of a finite measure $|Du|$ and a $|Du|$-measurable function  $\sigma_u$ 
such that
\begin{align*}
\int_M u\, \dv_g (X(1-R_\delta)) \, dv_g&= - \int_M \langle \sigma_u, X (1-R_\delta)\rangle_g \, d|Du|\\
&\xrightarrow{\delta \searrow 0} - \int_M \langle \sigma_u, X \rangle_g \, d|Du|.
\end{align*}
since $X (1-R_\delta) \in \Gamma_0(T M)$ and by the use of Lemma \ref{lemRadonMeasure}.

Considering the first term on the right hand side of \eqref{T1T2} we introduce 
a finite collection of charts $\{ (U_i, \varphi_i)\}_{i \in I}$ in geodesic boundary coordinates which 
covers $ M \backslash M_\delta $ and a subordinate partition of unity $\{\psi_i\}_{i \in I}$
such that 
\begin{align}
\int_{M} u \, \dv_g (X R_\delta)\, dv_g= \sum_{ i \in I} \int_{\varphi_i(U_i)}(\psi_i  u \, \dv_g (X R_\delta))
\circ \varphi_i^{-1} \sqrt{|g_i|}\, dz.\label{partition}
\end{align}

From \cite[p. 177, Theorem 1] {Evans/Gariepy} we know that for  $V \subset \re^n$ open and bounded with $\partial V$ Lipschitz there exists  a linear trace operator
\begin{align*}
\Theta: \BV(V) \rightarrow L^1(\partial V; H^{n-1}),
\end{align*}
where $H^{n-1}$ denotes the Hausdorff-measure restricted to $\partial V$, such that
\begin{align}
\int_V h \dv\phi\, dz=- \int_V\langle \phi, \sigma_h \rangle \, d|Dh|+ \int_{\partial V}\langle \phi, \nu\rangle\, \Theta h \, dH^{n-1}\label{traceR}
\end{align}
for all $h \in \BV(V) $ and $\phi \in C^\infty(V, \re^n)$, where $|Dh|$ and $\sigma_h$ are defined as in Lemma \ref{mppp}
and $\nu$ denotes the unit outer normal to $\partial V$ with respect to the standard Euclidean scalar product
$\langle\cdot,\cdot\rangle$.

We want to apply (\ref{traceR}) to an arbitrary summand of the right-hand side of (\ref{partition}) and suppress
the index $i$ in the following, i.e. $\varphi_i= \varphi: U \rightarrow \varphi(U)= V$ etc.
It is easy to see that \begin{align}
 \bar u:= (u\psi)\circ \varphi^{-1} \in \BV(V)
 \end{align} 
as on a compact set, i.e. particularly on the set $\supp(\psi \circ \varphi^{-1})\subset \setR^n$, there exist constants $c, c' \in \re^+$ such that
\begin{align*}
\frac{1}{c} \leq \sqrt{|g|}\leq c
\text{ and }\frac{1}{c'}|z|\leq |z|_g := \sqrt{g_{ij} z^i z^j} \leq c'|z|
\end{align*}
uniformly for $z \in \re^n$.
After the introduction of $\phi_\delta:=(\phi^1_\delta,\ldots,\phi^n_\delta)$
with $\phi^i_\delta:=\bar X^i (R_\delta\circ \varphi^{-1}) \sqrt{|g|}$
and $\bar X^i$ being the local components of $X$ the application of \eqref{traceR} yields
\begin{align}\label{SpuraufV}
\int_{V}(\psi  u \, \dv_g (X R_\delta))\circ \varphi^{-1}\sqrt{|g|} \, dz
 =\int_V\bar u\, \dv\phi_\delta\, dz = S_1^\delta + S_2^\delta + S_3^\delta 
\end{align}
with
\begin{align*}
S_1^\delta&:=-\int_V \langle \phi_\delta , \sigma_{\bar u}\rangle \, d|D\bar u|,\\
S_2^\delta&:=\int_{\partial V\cap \partial \he^{n}}\langle \phi_\delta , \nu\rangle \, \Theta \bar u \, dH^{n-1},\\
S_3^\delta&:=\int_{\partial V\backslash \partial \he^{n}}\langle \phi_\delta, \nu\rangle \, \Theta \bar u \, dH^{n-1}.
\end{align*}
The terms $S_1^\delta$ and $S_3^\delta$ converge to zero for $\delta \searrow 0$ which can be seen
by Lemma \ref{lemRadonMeasure} and the fact that 
$\Theta \bar u \in L^\infty(\partial V \backslash \partial \he^n) $ (see \cite[p. 181, Theorem 2]{Evans/Gariepy}).

%


Regarding $S_2^\delta$ recall that we chose geodesic boundary coordinates and hence
$\langle \phi_\delta, \nu\rangle = -\bar X^n  \sqrt{|g|} = \langle X, N\rangle_g\circ\varphi^{-1} \sqrt{|\tilde g|}$
on $\partial V\cap \partial\he^n$.
Consequently
\begin{align}
S_2^\delta= \int_{\partial V\cap \partial \he^{n}}\langle \phi_\delta, \nu\rangle \, \Theta \bar u \, dH^{n-1}
 = \int_{\partial U\cap \partial M} \langle X, N\rangle_g\,  T(u\psi) \,
 dv_{\widetilde g} \label{untererRand}
\end{align}
with $T(u\psi):= \Theta(\bar u) \circ \varphi : \partial U \cap \partial M \rightarrow \re$.
Analogously, we define the trace for each $u\psi_i$ on  $\partial M$ as
\begin{align*}
T^iu:=  \begin{cases}
\Theta ((u \psi_i)\circ \varphi_i^{-1})\circ \varphi_i &\text{ on } \partial M \cap \partial U_i, \\
0 &\text{ on } \partial M \backslash \partial U_i 
\end{cases}
\end{align*}
and
\begin{align*}
 Tu := \sum_{i\in I} T^iu.
\end{align*}
For $\delta\searrow 0$ in \eqref{partition} we finally obtain
\begin{align}
 \lim_{\delta \searrow 0} \int_{M } u \, \dv_g (X R_\delta)\, dv_g = \int_{\partial M} \langle X, N\rangle_g\, Tu \, dv_{\widetilde g}
 \label{KonvergenzSpur}
\end{align}
which proves existence.\\
To prove uniqueness we assume that (\ref{trace}) holds for $Tu$ and for $v \in L^\infty(\partial M)$. Subtraction of the corresponding equations yields

\begin{align*}
 \int_{\partial M} \langle X, N\rangle_g\, (Tu-v) \, dv_{\widetilde g} =0 \, \text{ for all $ X \in \Gamma(T\bar M)$}.
\end{align*}
This is true particularly in the limit $\delta\searrow0$ 
for $X=X_\delta:= R_\delta \phi N_\delta$ with an arbitrary $\phi \in C^\infty(\bar M)$ 
and $N_\delta$ being an extension of $N$ to $\bar M\backslash M_\delta$ for small $\delta$.
The fundamental lemma of calculus of variations proves uniqueness  up to a set of $\text{vol}_{\widetilde g}$-measure zero.
Linearity can be proved by considering \eqref{trace} for functions $u,v\in BV(M)$ and their sum $u+v$ for $X=X_\delta$ in the limit
$\delta\searrow0$.
\end{proof}

From the proof of Theorem \ref{SpursatzM} we obtain the following corollary.

\begin{corollary}\label{kordirFolgerungSpur}
 For $u \in \BV(M)\cap L^\infty(M)$ and $X \in \Gamma(TM)$ we have

 \begin{align*}
  \lim_{\delta \searrow 0} \int_M u \langle \grad_g R_\delta, X\rangle_g \, dv_g = \int_{\partial M} TU \langle X, N \rangle_g \, dv_{\widetilde g}.
 \end{align*}
\begin{proof}

 \begin{align*}
  \int_M u \langle \grad_g R_\delta, X\rangle_g \, dv_g
  &=\int_M u \, \dv_g (X\, R_\delta)\, dv_g-
 \underbrace{ \int_M u R_\delta \dv_g X\, \, dv_g}_{\xrightarrow{\delta \searrow 0} 0}\\
 &\xrightarrow{\delta \searrow 0} \int_{\partial M} Tu \langle X, N\rangle_g \,  dv_{\widetilde g}\quad (\text{cf. (\ref{KonvergenzSpur})}).
 \end{align*}

\end{proof}

\end{corollary}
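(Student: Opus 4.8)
The plan is to split the integrand by the Leibniz rule for the Riemannian divergence and then treat the two resulting pieces separately, reusing the convergence already established inside the proof of Theorem~\ref{SpursatzM}.

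First I would record the product rule for the smooth cut-off $R_\delta$ and a vector field $X$ that is smooth up to the boundary,
\begin{align*}
 \dv_g(R_\delta X) = R_\delta\,\dv_g X + \langle \grad_g R_\delta, X\rangle_g,
\end{align*}
which follows at once in local coordinates from $\dv_g Y = \frac{1}{\sqrt{|g|}}\partial_i(Y^i\sqrt{|g|})$. Solving for the term of interest and integrating against $u$ produces the decomposition
\begin{align*}
 \int_M u\,\langle \grad_g R_\delta, X\rangle_g\, dv_g
 = \int_M u\,\dv_g(R_\delta X)\, dv_g - \int_M u\,R_\delta\,\dv_g X\, dv_g,
\end{align*}
that is, exactly the first line of the chain appearing in the statement.

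Next I would let $\delta\searrow 0$ in each summand. The first integral is precisely the quantity treated in \eqref{KonvergenzSpur} during the proof of Theorem~\ref{SpursatzM}, so it converges to $\int_{\partial M}\langle X, N\rangle_g\, Tu\, dv_{\widetilde g}$ and requires no new argument. For the second integral I would show that it vanishes. Since $\bar M$ is compact, $\dv_g X$ is bounded, and because $u\in L^\infty(M)$ the product $u\,\dv_g X$ lies in $L^1(M)$; hence $\mu := |u\,\dv_g X|\, dv_g$ is a finite positive measure on $M$. Estimating
\begin{align*}
 \Big|\int_M u\,R_\delta\,\dv_g X\, dv_g\Big| \le \int_M R_\delta\, d\mu,
\end{align*}
Lemma~\ref{lemRadonMeasure} shows the right-hand side tends to $0$ as $\delta\searrow0$. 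Combining the two limits yields the asserted identity.

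The computation is essentially routine, and the single point requiring care is the second term. The hypothesis $u\in L^\infty(M)$, rather than merely $u\in\BV(M)\subset L^1(M)$, is what guarantees $u\,\dv_g X\in L^1(M)$ and thus the finiteness of $\mu$ needed to invoke Lemma~\ref{lemRadonMeasure}; passing to the absolute value is necessary because that lemma is stated for positive measures. Likewise, the boundary term on the right-hand side forces $X$ to be smooth up to $\partial M$ (i.e. $X\in\Gamma(T\bar M)$), so that $\langle X,N\rangle_g$ is defined on $\partial M$ and \eqref{KonvergenzSpur} is applicable.
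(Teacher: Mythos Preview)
Your proof is correct and follows exactly the paper's approach: the same Leibniz-rule decomposition, the same appeal to \eqref{KonvergenzSpur} for the divergence term, and the same observation that the remainder vanishes as $\delta\searrow0$. One minor remark: since $u\in\BV(M)\subset L^1(M)$ and $\dv_g X\in L^\infty(\bar M)$ by compactness, the product $u\,\dv_g X$ is already integrable without invoking the $L^\infty$ bound on $u$, so that hypothesis is not actually needed at this step.
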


\begin{corollary}[Properties of the trace  $Tu$] \label{korEigenschaftenTu}
The trace $Tu: \partial M \rightarrow  \re$ satisfies
\begin{enumerate}
  \item For $vol_{\widetilde g}$-almost every $x_0 \in \partial M$ we have
  \begin{align*}
   \lim_{\rho \searrow 0}\dashint_{B_\rho^g(x_0)} |u -Tu(x_0)|\, dv_g =0
  \end{align*}
and
\begin{align*}
 Tu(x_0)= \lim_{\rho \searrow 0} \dashint_{B_\rho^g(x_0)} u \, dv_g
\end{align*}
 
\item $Tu \in L^\infty (\partial M)$.

\item \label{CorTraceComm}For  $ h\in C^1\left([-z, z]\right)$ with  $[-z,z] \subset \re$ and $z> \|Tu\|_{L^\infty(\partial M)}$ it is
\begin{align*}
T[h(u)]= h(Tu)  
\end{align*}
almost everywhere on $\partial M$.
\end{enumerate}

\end{corollary}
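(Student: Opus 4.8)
The plan is to deduce all three assertions from a single fact that is essentially available from the proof of Theorem~\ref{SpursatzM}: the Euclidean trace operator $\Theta$ of \cite[pp.~176--183]{Evans/Gariepy} realizes boundary values as strong $L^1$-limits of averages over half-balls, i.e.\ for $V\subset\setR^n$ bounded with Lipschitz boundary and $\bar u\in\BV(V)$ one has, for $H^{n-1}$-almost every $y_0\in\partial V$,
\[
\lim_{r\searrow0}\ \dashint_{B(y_0,r)\cap V}\,|\bar u(y)-\Theta\bar u(y_0)|\,dy=0 .
\]
It then remains to transport this through the charts $(U_i,\varphi_i)$, $V_i:=\varphi_i(U_i)$, and the partition of unity $\{\psi_i\}_{i\in I}$ used in the construction of $Tu$, after which (i)--(iii) all follow by soft arguments.

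\textbf{Step 1 (a local Lebesgue-point identity).} Fix $i\in I$ and put $\bar u_i:=(\psi_i u)\circ\varphi_i^{-1}\in\BV(V_i)$. On the compact set $\supp(\psi_i\circ\varphi_i^{-1})$ we have $c^{-1}\le\sqrt{|g_i|}\le c$ and $c'^{-1}|z|\le|z|_{g_i}\le c'|z|$, hence $d_g$ and the pullback of the Euclidean metric are bi-Lipschitz equivalent near any $x_0\in U_i\cap\partial M$: there is $C>1$ with
\[
B(\varphi_i(x_0),\rho/C)\cap\he^n\ \subset\ \varphi_i\bigl(B_\rho^g(x_0)\bigr)\ \subset\ B(\varphi_i(x_0),C\rho)\cap\he^n
\]
for all small $\rho$, together with $\vol_g\bigl(B_\rho^g(x_0)\bigr)\ge c^{-1}\mathcal L^n\bigl(B(\varphi_i(x_0),\rho/C)\cap\he^n\bigr)$ and $dv_g=\sqrt{|g_i|}\,dz$. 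Combining these with the change of variables $z=\varphi_i(y)$ produces a constant $C'$, independent of $\rho$ and $x_0$, such that
\[
\dashint_{B_\rho^g(x_0)}\bigl|\psi_i u-\Theta\bar u_i(\varphi_i(x_0))\bigr|\,dv_g\ \le\ C'\dashint_{B(\varphi_i(x_0),C\rho)\cap V_i}\bigl|\bar u_i-\Theta\bar u_i(\varphi_i(x_0))\bigr|\,dz .
\]
Since $\varphi_i$ is a diffeomorphism, $\vol_{\widetilde g}$-null sets in $U_i\cap\partial M$ correspond to $H^{n-1}$-null sets in $V_i\cap\partial\he^n$, so by the Euclidean fact the right-hand side tends to $0$ as $\rho\searrow0$ for $\vol_{\widetilde g}$-a.e.\ $x_0\in U_i\cap\partial M$. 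Using $\Theta\bar u_i\circ\varphi_i=T^iu$ on $U_i\cap\partial M$ (and the trivial fact that $\psi_i u$ and $T^iu$ vanish near boundary points outside $\overline{U_i}$) we conclude
\[
\lim_{\rho\searrow0}\ \dashint_{B_\rho^g(x_0)}\,|\psi_i u-T^iu(x_0)|\,dv_g=0
\]
for $\vol_{\widetilde g}$-a.e.\ $x_0\in\partial M$ and every $i\in I$.

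\textbf{Step 2 (conclusions).} As $I$ is finite and $\sum_{i\in I}\psi_i\equiv1$ on $M$, summing the identities of Step~1 gives, for $\vol_{\widetilde g}$-a.e.\ $x_0\in\partial M$,
\[
\dashint_{B_\rho^g(x_0)}|u-Tu(x_0)|\,dv_g\ \le\ \sum_{i\in I}\dashint_{B_\rho^g(x_0)}|\psi_i u-T^iu(x_0)|\,dv_g\ \xrightarrow[\rho\searrow0]{}\ 0 ,
\]
which is the first part of (i), and the second part follows since $\bigl|\dashint_{B_\rho^g(x_0)}u\,dv_g-Tu(x_0)\bigr|$ is dominated by the left-hand side. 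For (ii), this second part yields $|Tu(x_0)|\le\|u\|_{L^\infty(M)}$ for a.e.\ $x_0$, so $Tu\in L^\infty(\partial M)$ with $\|Tu\|_{L^\infty(\partial M)}\le\|u\|_{L^\infty(M)}$. For (iii), $h$ is Lipschitz, with some constant $L$, on a compact interval containing the essential range of $u$, hence $h(u)\in\BV(M)\cap L^\infty(M)$ by the chain rule for $\BV$ functions and (i) applies to $h(u)$ as well; using (i) for $u$,
\[
\Bigl|\dashint_{B_\rho^g(x_0)}h(u)\,dv_g-h(Tu(x_0))\Bigr|\ \le\ L\dashint_{B_\rho^g(x_0)}|u-Tu(x_0)|\,dv_g\ \xrightarrow[\rho\searrow0]{}\ 0 ,
\]
so $T[h(u)](x_0)=\lim_{\rho\searrow0}\dashint_{B_\rho^g(x_0)}h(u)\,dv_g=h(Tu(x_0))$ for a.e.\ $x_0\in\partial M$.

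\textbf{Main obstacle.} The only genuinely non-routine step is Step~1: first pinning down that the Euclidean trace coincides $H^{n-1}$-a.e.\ with the strong $L^1$-limit of half-ball averages, and then carrying this limit over to geodesic balls and $dv_g$ \emph{without} invoking normal coordinates. This works because the chart is bi-Lipschitz near $x_0$, so $\varphi_i(B_\rho^g(x_0))$ is squeezed between Euclidean half-balls of comparable radii and $\sqrt{|g_i|}$ is bounded above and below; crucially the factor $C'$ relating the two averages need not tend to $1$, it only has to stay bounded, which already forces the Riemannian average to $0$. One must additionally keep track of the correspondence of $\vol_{\widetilde g}$- and $H^{n-1}$-null sets under $\varphi_i$ and of the fact that only finitely many $\psi_i$ are non-zero near a given $x_0$.
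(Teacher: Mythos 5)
Your proposal is correct and takes essentially the same route as the paper: all three claims are reduced, via the boundary charts and the partition of unity, to the Euclidean result of \cite[p.~181, Theorem 2]{Evans/Gariepy} that the trace is the strong $L^1$-limit of averages over half-balls, with (2) immediate from (1) and (3) following from the Lipschitz (mean value) estimate applied to $h$. The only real difference is that the paper invokes normal coordinates at $x_0$, whereas you stay in the fixed charts used to define $T^iu$ and note that bounded (rather than asymptotically Euclidean) comparability of the metric and of geodesic versus Euclidean half-balls already suffices, which has the mild advantage of identifying the limit directly with $Tu=\sum_i T^iu$ without an extra change-of-coordinates argument.
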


\begin{proof}
To prove claim (1), we choose normal coordinates on $B_\rho^g(x_0)$ and refer to the Euclidean
case (\cite[p. 181, Theorem 2]{Evans/Gariepy}).
Claim (2) follows immediately from claim (1).
The mean value theorem together with claim (1) yield claim (3).
The argumentation is analogous to the one in the Euclidean case.
\end{proof}


Considering $M \times (0,T)$ as an $(n+1)$-dimensional manifold endowed with the Riemannian metric $g_T$, locally
defined by $g_T=dt^2+g_{ij}dx^i dx^j$, we will need the following Lemma.

\begin{lemma} \label{lemProduktBV}
For $ u \in \BV(M \times (0,T))$ we have 
\begin{enumerate}
\item  $ u(\cdot, t) \in \BV(M)$ for almost every  $ t \in (0,T)$ and 
\item  $u(x, \cdot) \in \BV((0,T))$ for almost every $ x \in M$.
\end{enumerate}
\end{lemma}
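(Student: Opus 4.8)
The statement asserts that BV regularity on the product manifold $M\times(0,T)$ restricts to BV regularity on almost every slice. The strategy is to reduce to the well-known Euclidean fact that a function in $\BV(\Omega\times(0,T))$ with $\Omega\subset\setR^n$ open has slices in $\BV(\Omega)$ for a.e. $t$ (and symmetrically), which is part of the structure theory of BV functions in several variables (see e.g. \cite{Ambrosio,Evans/Gariepy}), and then to globalize via a partition of unity exactly as was done for the trace theorem and for Lemmas \ref{lemSaks}, \ref{lemLebesgueM}. Since $M$ itself is a manifold without boundary here, no boundary charts are needed; the product metric $g_T=dt^2+g_{ij}dx^idx^j$ has the convenient feature that in any product chart $\varphi\times\mathrm{id}$ the volume element is $\sqrt{|g_T|}=\sqrt{|g|}$ and the $t$-direction is $g_T$-orthonormal, so the Euclidean $t$-slicing corresponds precisely to the geometric $t$-slicing.

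\textbf{Key steps.} First, cover $\bar M$ by finitely many charts $\{(U_i,\varphi_i)\}_{i\in I}$ and fix a subordinate smooth partition of unity $\{\psi_i\}$; then $\{(U_i\times(0,T),\varphi_i\times\mathrm{id})\}$ covers $M\times(0,T)$ with partition of unity $\{\psi_i\}$ (pulled back, independent of $t$). For each $i$, the bounds $\tfrac1c\le\sqrt{|g_i|}\le c$ and the comparability of $|\cdot|_{g_i}$ with $|\cdot|$ on $\supp(\psi_i\circ\varphi_i^{-1})$ — used verbatim in the proof of Theorem \ref{SpursatzM} — show that $\bar u_i:=(u\psi_i)\circ(\varphi_i\times\mathrm{id})^{-1}\in\BV(\varphi_i(U_i)\times(0,T))\subset\BV_{\mathrm{loc}}$. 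Second, invoke the Euclidean slicing theorem: for a.e. $t\in(0,T)$, $\bar u_i(\cdot,t)\in\BV(\varphi_i(U_i))$, with $\int_0^T\tv(\bar u_i(\cdot,t);\varphi_i(U_i))\,dt<\infty$; taking the finite union of the corresponding null sets over $i\in I$ gives a single null set $\mathcal N\subset(0,T)$ outside of which every $\bar u_i(\cdot,t)$ is $\BV$. Third, for $t\notin\mathcal N$, transfer back: $u(\cdot,t)\psi_i=\bar u_i(\cdot,t)\circ(\varphi_i\times\mathrm{id})$ lies in $\BV(U_i)$ by the same chart comparability estimates applied in reverse, and $u(\cdot,t)=\sum_{i\in I}u(\cdot,t)\psi_i$ is a finite sum of $\BV(M)$ functions, hence in $\BV(M)$. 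This proves (1); statement (2) is obtained by the identical argument with the roles of the $t$-variable and the $x$-variables interchanged (here no charts are even needed in the $t$-direction since $(0,T)$ is already Euclidean, so one covers only the $M$-factor and slices in $x$), or more directly by applying the Euclidean one-dimensional slicing result $\bar u_i(x,\cdot)\in\BV((0,T))$ for a.e. $x$ and pushing forward through $\varphi_i^{-1}$.

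\textbf{Main obstacle.} The only non-bookkeeping point is making sure the chart distortion does not destroy the slice structure — i.e. that the Euclidean $t=\mathrm{const}$ hyperplanes genuinely correspond, under $\varphi_i\times\mathrm{id}$, to the geometric time slices $M\times\{t\}$, and that the weight $\sqrt{|g_i|}$ multiplying the Euclidean Lebesgue measure is bounded above and below on the relevant compact set so that $L^1$ and total-variation finiteness are preserved in both directions. Both are immediate from the product structure of $g_T$ and from the compactness of $\bar M$ (finitely many charts, each giving uniform constants $c_i$), so the proof is essentially a localization argument with no analytic difficulty beyond citing the Euclidean slicing theorem. A minor care is that the exceptional null set must be taken as the finite union over the finitely many charts, which is harmless precisely because $\bar M$ is compact.
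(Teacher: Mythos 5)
Your argument is correct and follows essentially the same route as the paper: localize with a finite partition of unity in charts (using the uniform comparability of $\sqrt{|g_i|}$ and of the metrics on the compact supports), apply the Euclidean slicing result for BV functions on the product (the paper cites Lemma 1 of Bardos--LeRoux for exactly this), and recombine over the finitely many charts. No substantive difference.
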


\begin{proof}
This can be proved via a partition of unity and by then applying  Lemma 1 from \cite[p. 1019]{BardosLeroux}.
\end{proof}


\begin{lemma}\label{korfBVM}
Let $u \in \BV(M)\cap L^\infty(M) $ and $F\in C^1(\re \times \bar M)$. Then the function
\begin{align*}
  x \mapsto F(u(x), x)
\end{align*}
is in $\BV(M)$. 
\end{lemma}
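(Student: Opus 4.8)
The plan is to reduce the statement to the Euclidean chain-rule result for $BV$ functions via a partition of unity, exactly in the spirit of the proofs of Lemmas \ref{lemSaks}, \ref{lemLebesgueM} and \ref{lemProduktBV}. First I would fix a finite atlas $\{(U_i,\varphi_i)\}_{i\in I}$ of $\bar M$ (charts into $\he^n$) together with a subordinate smooth partition of unity $\{\psi_i\}_{i\in I}$, $\sum_i\psi_i\equiv 1$, $\supp\psi_i\subset U_i$. Since $u\in\BV(M)\cap L^\infty(M)$, for each $i$ the pushed-forward function $\bar u_i:=(\psi_i u)\circ\varphi_i^{-1}$ lies in $\BV(\varphi_i(U_i))$ and is bounded; this is the same reasoning already used in the proof of Theorem \ref{SpursatzM} (on the compact set $\supp(\psi_i\circ\varphi_i^{-1})$ the metric coefficients and $\sqrt{|g|}$ are bounded above and below, so the Euclidean and Riemannian notions of total variation are comparable).

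Next I would invoke the Euclidean chain rule for $BV$ functions (e.g. \cite[p.\ 163, Theorem 4]{Evans/Gariepy} or the corresponding result in \cite{Ambrosio}): if $w\in BV_{\loc}(V)\cap L^\infty$ and $G\in C^1(\re\times V)$ with $G(\cdot,z)$ Lipschitz on the relevant bounded range uniformly in $z$, then $z\mapsto G(w(z),z)$ is in $BV_{\loc}(V)$. Here I apply it with $V=\varphi_i(U_i)$ (or a slightly smaller open set containing $\supp\bar u_i$), $w=\bar u_i$, and a suitable $G$ built from $F$ and $\varphi_i^{-1}$. There is a minor technical point: $F(u(x),x)$ is not simply a composition of the pushed-forward pieces, because $F$ is evaluated at $u$, not at $\psi_i u$. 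To handle this cleanly I would instead argue locally that $u\big|_{U_i}\in\BV(U_i)\cap L^\infty$ — which follows since $\tv(u,U_i)\le\tv(u,M)<\infty$ — push that to $\hat u_i:=u\circ\varphi_i^{-1}\in BV(\varphi_i(U_i))\cap L^\infty$, apply the Euclidean chain rule to $\hat G_i(s,z):=F(s,\varphi_i^{-1}(z))$ to conclude $z\mapsto \hat G_i(\hat u_i(z),z)=F(u,x)\circ\varphi_i^{-1}\in BV_{\loc}$, and then pull back to obtain $F(u,\cdot)\in\BV(U_i)$ for each $i$.

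Finally I would glue the local information: writing $F(u(x),x)=\sum_{i\in I}\psi_i(x)F(u(x),x)$, each summand $\psi_i F(u,\cdot)$ is a product of a smooth compactly-supported function with a $BV$ function on $U_i$, hence lies in $\BV(M)$ (the Leibniz rule for $BV$ times $C^1$, again a local Euclidean fact), and $I$ is finite, so the sum is in $\BV(M)$. Boundedness of $F(u,\cdot)$ in $L^1(M)$ is immediate from compactness of $\bar M$, the continuity of $F$, and $\|u\|_{L^\infty}<\infty$. The main obstacle is purely bookkeeping rather than conceptual: one must make sure the chain rule is applied on sets where $F(\cdot,z)$ is genuinely Lipschitz uniformly in $z$ — which is guaranteed because $u$ takes values in the compact interval $[-\|u\|_{L^\infty},\|u\|_{L^\infty}]$ and $z$ ranges over the compact set $\varphi_i(\supp\psi_i)$, so $\partial_s F$ and $\grad_z F$ are bounded there — and that the various localizations are compatible; none of this is deep, so I would present it concisely, exactly as the neighbouring lemmas in this section do.
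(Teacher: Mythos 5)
Your argument is correct, but it follows a genuinely different route from the paper. The paper's proof is global: it invokes the approximation results of \cite{Miranda} (Proposition 1.4, Theorems 2.1 and 3.3) to get smooth functions $u_j\to u$ in $L^1(M)$ with $\tv(u,M)=\lim_j\int_M|\grad_g u_j|_g\,dv_g$ and $\|u_j\|_{L^\infty}\le\|u\|_{L^\infty}$, and then bounds $\sup\{\int_M F(u,\cdot)\,\dv_g X\,dv_g:\ X\in\Gamma_0(TM),|X|_g\le1\}$ directly by passing to the limit, using $F(u_j,\cdot)\to F(u,\cdot)$ in $L^1$ and the uniform bounds on $\partial_uF(u_j,\cdot)$ and $(\grad_g F)(u_j,\cdot)$; no charts are needed. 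You instead localize with a partition of unity, push $u$ forward to a chart, apply the Euclidean $\BV$ chain rule (Vol'pert-type, equivalently composition of the $\BV\cap L^\infty$ map $z\mapsto(\hat u_i(z),z)$ with a locally Lipschitz $G$), and glue. This works, and you correctly caught the pitfall that $F(u,\cdot)\neq\sum_i F(\psi_i u,\cdot)$, handling it by localizing $u$ itself rather than $\psi_i u$. Two bookkeeping points you gesture at should be made explicit if you write this up: (i) you need \emph{genuine} $\BV$, not just $\BV_{\loc}$, of $F(u,\cdot)\circ\varphi_i^{-1}$ on a neighbourhood of $\varphi_i(\supp\psi_i)$, so shrink to an open $W_i$ with $\supp\psi_i\subset W_i\subset\bar W_i\subset U_i$ where the metric coefficients are uniformly comparable to the Euclidean ones, and observe $\tv(u,W_i)\le\tv(u,M)$; (ii) in the gluing step the estimate for $\int_M \psi_i F(u,\cdot)\,\dv_g X\,dv_g$ should be split as $\int F(u,\cdot)\,\dv_g(\psi_i X)-\int F(u,\cdot)\langle\grad_g\psi_i,X\rangle_g$, the first term controlled by $\tv(F(u,\cdot),W_i\cap M)$ since $\psi_iX\in\Gamma_0(T(W_i\cap M))$, the second by $\|F(u,\cdot)\|_{L^1}\|\grad_g\psi_i\|_{L^\infty}$. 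Comparing the two approaches: the paper's is shorter and chart-free, leaning on Miranda's mollification machinery already cited elsewhere, and it immediately yields a quantitative bound of $\tv(F(u,\cdot),M)$ in terms of $\tv(u,M)$ and the sup-norms of $\partial_uF$ and $\grad_gF$; yours is more self-contained on the manifold side (only the local metric comparability already used in the proof of Theorem \ref{SpursatzM}) but outsources the real work to the Euclidean chain-rule theorem and requires the shrinking/gluing care above.
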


\begin{proof}
For the proof we use Proposition 1.4, Theorem 2.1 and Theorem 3.3  from \cite[p. 105, p. 109 and p. 117]{Miranda},
where the  existence of a sequence 
$(u_j)_{j \in \na} \in C_0^\infty(M)$ with the properties 
\begin{align*}
 u_j &\rightarrow u \, \text{ in $L^1( M)$, } \\
 \tv(u,M) &= \lim_{j \rightarrow \infty} \int_{ M} |\grad_g u_j | \, dv_g \text{ and}\\
\|u_j \|_{L^\infty(M)} &\leq \|u\|_{L^\infty(M)} \text{ for all $j \in \na$}
\end{align*}
is shown. Using the fact that
\begin{align*}
 F(u_j, \cdot) \rightarrow F(u, \cdot) \, \text{ in $L^1(M)$}
\end{align*}
and the boundedness of $\| \partial_u F(u_j (\cdot), \cdot)\| _{L^{\infty}(M)}$ and $\| (\grad_g F) (u_j, \cdot) \|_{L^\infty(M)}$ uniformly in $j$
we obtain

\begin{align*}
\sup\left\{ \int_M F(u,\cdot) \, \dv_g X \, dv_g\, | \, X \in \Gamma_0(T M), \, |X|_g \leq 1\right\} < \infty,
\end{align*}
which proves the claim.

\end{proof}

 
 \section{Application to scalar conservation laws}
 \label{sec:ibvp}
 We will now use the previous results to show existence, uniqueness
 and total variation estimates for an entropy solution of problem (\ref{11}), (\ref{12}) with admissible boundary conditions.
 We will proceed as in \cite{BardosLeroux} and emphasize only the boundary terms. For the rest we refer to \cite{MuellerLengeler} in which a generalization of 
 problem (\ref{11}), (\ref{12}) to Riemannian manifolds (without boundary) is treated.
 
Let $\bar M$, $u$ and $f$ be defined as in the introduction. We will write $u$ instead of $u(x,t)$ and $f(u)$ instead of $f(u(x,t),x,t)$ 
whenever this should not lead to confusion.

 Considering the characteristics of (\ref{11}), (\ref{12}) one can see that, in general, it is not possible to require $u=0$ on the whole boundary.\\
To define admissible boundary conditions and to prove existence and uniqueness of a solution of (\ref{11}), (\ref{12}) with appropriate boundary conditions
we will use the vanishing viscosity method which consists in passing to the limit, as $\epsilon >0$ tends to zero, 
in the solution $u^\epsilon$ of the parabolic regularization
\begin{align}
\partial_t u^\epsilon +\dv_g f(u^\epsilon,x,t)- \epsilon \Delta_g u^\epsilon &=0 \quad \text{ in $\bar M \times (0,T)$},\label{1}\\
u^\epsilon(\cdot,0)&= u_0^\epsilon \quad \text{in $\bar M$},\label{2}\\
u^\epsilon&=0 \quad \text{ on $\partial M \times (0,T),$}\label{3}
\end{align}\\
where $u_0^\epsilon:M\rightarrow\setR$ denotes a sequence of $u_0$ mollifying functions satisfying
\begin{align}
 \|u_0^\epsilon\|_{L^\infty(M)} &\leq \|u_0\|_{L^\infty(M)},\\
 \|u_0^\epsilon- u_0\|_{L^1(M)}&\xrightarrow {\epsilon \searrow 0} 0,\label{L1mollification} \\
 \tv(u_0^\epsilon, M)&\xrightarrow {\epsilon \searrow 0} \tv(u_0, M) ,\label{L1mollificationTV}  \\ 
 \epsilon \|u_0^\epsilon\|_{H^{2,1}(M)} &\leq c_0 \tv(u_0, M),
\end{align}
for a constant $c_0>0$.
 Existence and uniqueness of a solution $u^\epsilon \in C^\infty(\bar M \times (0,T))$ of (\ref{1})-(\ref{3}) is shown in 
  \cite{Taylor}.
 
  
 \subsection{Convergence of a parabolic regularization}
 In this section we will show convergence for a subsequence of the solutions $\{u^\epsilon\}_{\epsilon >0}$ of the regularized problem \eqref{1}-\eqref{3}.
 To this end we prove boundedness of $\{u^\epsilon\}_{\epsilon >0}$ in 
 $L^{\infty}(M \times (0,T)) \cap H^{1,1}(M \times (0,T))$ uniform in $\epsilon$,
 where the product manifold $M\times(0,T)$ is endowed with the Riemannian metric $g_T$
 and apply the following theorem 
 from \cite{AubinII}.
 
 
 \begin{theorem}\label{thmKondrakov}
Let $(\bar M,g)$ be a compact Riemannian manifold with Lipschitz boundary $\partial M$. Then for $p,q \geq 1$ with $1 \geq 1/q> 1/p-1/n > 0$ 
the embedding
\begin{align*}
H^{1,p} ( M) \hookrightarrow L^q( M)
\end{align*}
is compact.

\end{theorem}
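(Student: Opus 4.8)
\textbf{Proof plan for Theorem \ref{thmKondrakov} (compact Sobolev embedding).}
The plan is to reduce the statement on the compact manifold with Lipschitz boundary to the classical Rellich--Kondrachov theorem on bounded Lipschitz domains in $\setR^n$ by means of a finite atlas and a subordinate partition of unity. First I would cover $\bar M$ by finitely many charts $\{(U_k,\varphi_k)\}_{k=1}^{m}$ such that each $\varphi_k(U_k)\subset\setR^n$ (or $\subset\he^n$ for boundary charts) is a bounded domain with Lipschitz boundary, and such that on each $\overline{U_k}$ the metric $g$ is comparable to the Euclidean metric, i.e. there is $c_k\ge 1$ with $c_k^{-1}\delta_{ij}\le g_{ij}\le c_k\delta_{ij}$ and $c_k^{-1}\le\sqrt{|g|}\le c_k$ uniformly on $\varphi_k(U_k)$; such charts exist by compactness and smoothness of $g$, and the boundary charts can be taken as (collar) coordinates so that the chart images are Lipschitz domains. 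Let $\{\psi_k\}_{k=1}^m$ be a smooth partition of unity subordinate to this cover.

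The key steps, in order, are as follows. (i) Show that the map $u\mapsto\bigl((\psi_k u)\circ\varphi_k^{-1}\bigr)_{k=1}^m$ is a bounded linear operator from $H^{1,p}(M)$ into $\prod_{k=1}^m H^{1,p}(\varphi_k(U_k))$: this uses the metric comparability above to bound $\setR^n$-gradients by $g$-gradients and Riemannian volume by Lebesgue measure on each chart, together with the product rule $\grad_g(\psi_k u)=\psi_k\grad_g u+u\,\grad_g\psi_k$ and the uniform boundedness of $\psi_k$ and $|\grad_g\psi_k|_g$. (ii) Apply the Euclidean Rellich--Kondrachov theorem on each bounded Lipschitz domain $\varphi_k(U_k)$: under the hypothesis $1\ge 1/q>1/p-1/n>0$ the embedding $H^{1,p}(\varphi_k(U_k))\hookrightarrow L^q(\varphi_k(U_k))$ is compact. (iii) Show that the reconstruction map $(v_k)_{k}\mapsto\sum_k v_k\circ\varphi_k$ (extending each $v_k$ by $0$ outside $U_k$, which is legitimate since $\psi_k u$ is supported in $U_k$) is a bounded linear operator from $\prod_k L^q(\varphi_k(U_k))$ to $L^q(M)$, again by metric comparability, and that its composition with the map from step (i) recovers $u=\sum_k\psi_k u$. (iv) Conclude: given a bounded sequence $(u_j)$ in $H^{1,p}(M)$, the sequences $\bigl((\psi_k u_j)\circ\varphi_k^{-1}\bigr)_j$ are bounded in $H^{1,p}(\varphi_k(U_k))$ for each $k$, hence by step (ii) and a diagonal argument over the finitely many $k$ there is a subsequence along which each converges in $L^q(\varphi_k(U_k))$; applying the continuous reconstruction map of step (iii) yields convergence of the corresponding subsequence of $(u_j)$ in $L^q(M)$. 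This proves compactness of $H^{1,p}(M)\hookrightarrow L^q(M)$.

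The main obstacle is step (ii), or rather making sure its hypotheses are met: one must know the Euclidean Rellich--Kondrachov theorem in the form that applies to bounded domains with merely Lipschitz boundary (so that a Sobolev extension operator exists), and one must verify that the exponent condition $1\ge 1/q>1/p-1/n>0$ is exactly the classical subcritical range for which compactness holds. Since $\bar M$ has Lipschitz boundary by assumption, the boundary charts can indeed be chosen with Lipschitz image, so this reduces to invoking the standard Euclidean result, which is precisely what is recorded in \cite{AubinII}. The remaining ingredients — metric comparability on a finite atlas, boundedness of the partition-of-unity multiplication and reconstruction operators, and the finite diagonal extraction — are routine and require no estimates beyond those indicated above.
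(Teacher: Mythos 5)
Your argument is correct, and it is the standard localization proof of this embedding: finite atlas with chart images that are bounded Lipschitz domains, metric comparability on a compact atlas, boundedness of the cut-off maps $u\mapsto(\psi_k u)\circ\varphi_k^{-1}$ in $H^{1,p}$, the Euclidean Rellich--Kondrachov theorem on each chart image in the subcritical range $1\ge 1/q>1/p-1/n>0$, and finite subsequence extraction followed by the bounded reconstruction map into $L^q(M)$. The only point worth being careful about, and which you handle correctly, is that near $\partial M$ the localized functions need not vanish on the flat part of the chart boundary, so one must invoke the Lipschitz-domain (extension-domain) version of Rellich--Kondrachov rather than extending by zero across that boundary portion. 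The paper itself does not give an argument at all: it simply cites the Kondrakov theorem for compact manifolds with boundary from \cite[pp.\ 166--169, Theorem 11]{AubinII}, whose proof is essentially the reduction you describe. So your proposal is not a different route so much as a self-contained write-up of what the paper outsources to the reference; nothing in it fails, and no essential idea is missing.
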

\begin{proof}
See \cite[pp. 166-169, Theorem 11]{AubinII}.
\end{proof}

\begin{theorem} \label{thmH11}
 The solutions  $\{u^\epsilon\}_{\epsilon >0}$ of (\ref{1})-(\ref{3}) are
 bounded in $L^{\infty}(M \times (0,T)) \cap H^{1,1}(M \times (0,T))$ uniformly in $\epsilon$, precisely
 \begin{align}
 \|u^\epsilon\|_{L^\infty(M \times (0,T))} &\leq \|u_0^\epsilon\|_{L^\infty(M)}\leq \|u_0\|_{L^\infty(M)},\label{behLinfty}\\
 \|\partial_t u^\epsilon(\cdot, t)\|_{L^1(M)} &\leq  c_1 \tv(u_0, M) 
 \label{partialt},\\
\| \nabla u^\epsilon (\cdot, t)\|_{L^1(M)} 
&\leq \left((1+c_2 t) \tv(u_0, M) + o(1)\right) (1+c_3t\, e^{c_3t})\label{Gronwallnabla}
 \end{align}
 for every $t \in (0,T)$ and consequently
 \begin{align}
 \|u^\epsilon\|_{H^{1,1}(M \times (0,T))}< c_4,\label{eq:tvestimate}  
 \end{align}
 where 
 the constants $c_1,c_2,c_3,c_4>0$ do only depend on the data $\bar M$, $g$, $T$, $f$, $\|u_0\|_{L^\infty(M)}$,
 but not on $\epsilon$.
\end{theorem}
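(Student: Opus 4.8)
The plan is to establish the three estimates \eqref{behLinfty}, \eqref{partialt}, \eqref{Gronwallnabla} in turn and then assemble \eqref{eq:tvestimate}, working with the smooth solution $u^\epsilon$ of \eqref{1}--\eqref{3} so that all manipulations are classical. First I would prove the $L^\infty$-bound \eqref{behLinfty} by the maximum principle: the operator $\partial_t + \dv_g f(\cdot,x,t)\cdot\nabla - \epsilon\Delta_g$ is, because $\dv_g f = 0$, of the form $\partial_t + \langle \partial_u f, \nabla\,\cdot\,\rangle_g - \epsilon\Delta_g$ after writing $\dv_g f(u^\epsilon,x,t) = \langle(\partial_u f)(u^\epsilon,x,t),\nabla u^\epsilon\rangle_g + (\dv_g f)(u^\epsilon,x,t)$ and using the divergence-free hypothesis to kill the last term. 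Hence $\|u_0^\epsilon\|_{L^\infty}$ and $-\|u_0^\epsilon\|_{L^\infty}$ are a super- and a subsolution with the correct boundary data $0$ (recalling $\|u_0\|_\infty\ge0$), so the parabolic comparison principle on the compact manifold with boundary gives \eqref{behLinfty}; the second inequality is just the first mollification property of $u_0^\epsilon$.

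Next, for \eqref{partialt} I would differentiate \eqref{1} in $t$. Setting $w := \partial_t u^\epsilon$, one gets a linear parabolic equation for $w$ of the form $\partial_t w + \dv_g(\partial_u f\, w) + \dv_g(\partial_t f) - \epsilon\Delta_g w = 0$; using $\dv_g f=0$ one can arrange the zeroth-order and forcing terms so that, after multiplying by $\sgn(w)$ (or a smooth approximation $\sgn_\eta$) and integrating over $M$, the viscosity term contributes a nonpositive boundary term plus a nonpositive interior term, the transport term integrates to a controlled quantity because $\partial_u f$ is bounded and $\dv_g(\partial_u f\, \cdot)$ has the right structure, and the $\dv_g(\partial_t f)$ term is bounded by $c\,\mathrm{TV}$-type quantities via the smoothness of $f$. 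The boundary terms on $\partial M$ coming from integration by parts of $\epsilon\Delta_g w$ must be shown to have the right sign; this uses that $w = \partial_t u^\epsilon = 0$ on $\partial M\times(0,T)$ together with the Hopf-type observation that the inward normal derivative of $|w|$ at the boundary is $\le 0$. One then arrives at $\frac{d}{dt}\|w(\cdot,t)\|_{L^1(M)} \le C(1+\|w(\cdot,t)\|_{L^1(M)})$ or directly at a bound, and a Gronwall argument together with the initial bound $\|\partial_t u^\epsilon(\cdot,0)\|_{L^1} \le \|\dv_g f(u_0^\epsilon) - \epsilon\Delta_g u_0^\epsilon\|_{L^1} \le c\,\mathrm{TV}(u_0,M)$ (this is where $\epsilon\|u_0^\epsilon\|_{H^{2,1}}\le c_0\mathrm{TV}(u_0,M)$ and the gradient-estimate on $u_0^\epsilon$ enter) yields \eqref{partialt}.

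For the spatial gradient estimate \eqref{Gronwallnabla} I would apply $\nabla$ to \eqref{1} and test the resulting equation for $\nabla u^\epsilon$ against $\nabla u^\epsilon/|\nabla u^\epsilon|$ (again with a smooth regularization of the absolute value), integrating over $M$. The commutator $[\Delta_g,\nabla]u^\epsilon = \ric(\nabla u^\epsilon,\cdot)$ from \eqref{eq:commutator} produces a term bounded by $C\|\nabla u^\epsilon(\cdot,t)\|_{L^1}$ with $C$ depending only on the Ricci curvature of $(\bar M,g)$ and $T$; the transport term $\nabla(\dv_g f(u^\epsilon,x,t))$ expands by the chain rule into terms linear in $\nabla u^\epsilon$ (coefficients bounded via smoothness of $f$ and the $L^\infty$-bound \eqref{behLinfty} on $u^\epsilon$) plus a term involving $\nabla_x(\dv_g f)$ that is controlled and, after using $\partial_t u^\epsilon$ to substitute, contributes the $(1+c_2 t)\mathrm{TV}(u_0,M)+o(1)$ factor — here \eqref{partialt} and the convergence $\mathrm{TV}(u_0^\epsilon)\to\mathrm{TV}(u_0)$ feed in, the $o(1)$ being in $\epsilon$. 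The remaining obstacle is again the boundary integral from the viscous term: integrating $\epsilon\int_M \langle\Delta_g\nabla u^\epsilon,\nabla u^\epsilon/|\nabla u^\epsilon|\rangle_g$ by parts yields an $\epsilon$-weighted boundary term on $\partial M$ that need not have a favorable sign since $\nabla u^\epsilon$ is generally not zero on the boundary; one controls it by estimating $\epsilon\,|\nabla u^\epsilon|$ on $\partial M$ using a barrier/boundary-layer argument (a standard sub-barrier of the form $K(1 - e^{-d_g(x,\partial M)/\epsilon})$ bounding $\epsilon\partial_N u^\epsilon$), which is where I expect the main technical difficulty and where $\bar M$ being compact with smooth boundary is essential. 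Granting these, a Gronwall inequality in $t$ produces the stated bound \eqref{Gronwallnabla}. Finally \eqref{eq:tvestimate} follows by integrating \eqref{behLinfty}, \eqref{partialt}, \eqref{Gronwallnabla} over $t\in(0,T)$: the $L^\infty$-bound on the compact product manifold gives an $L^1$-bound, \eqref{partialt} bounds $\|\partial_t u^\epsilon\|_{L^1(M\times(0,T))}$, and \eqref{Gronwallnabla} bounds $\|\nabla u^\epsilon\|_{L^1(M\times(0,T))}$, all uniformly in $\epsilon$, so $\|u^\epsilon\|_{H^{1,1}(M\times(0,T))}$ is bounded by a constant $c_4$ depending only on $\bar M$, $g$, $T$, $f$, $\|u_0\|_{L^\infty(M)}$.
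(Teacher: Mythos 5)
Your treatment of \eqref{behLinfty} and \eqref{partialt} is essentially the paper's route (the paper proves the $L^\infty$ bound by a truncation-function argument \`a la Stampacchia rather than a comparison principle, and for \eqref{partialt} no Hopf-type sign argument is needed: since $u^\epsilon\equiv 0$ on $\partial M\times(0,T)$ one has $\partial_t u^\epsilon=0$ there, so all boundary terms produced by the integration by parts vanish identically). The genuine gap is in your plan for the gradient estimate \eqref{Gronwallnabla}. The boundary term created by integrating $\epsilon\int_M \langle \Delta_g\nabla u^\epsilon,\nabla u^\epsilon\rangle_g\,S_\eta'(|\nabla u^\epsilon|_g)/|\nabla u^\epsilon|_g\,dv_g$ by parts is $\epsilon\int_{\partial M}\langle \nabla u^\epsilon\otimes N,\nabla^2u^\epsilon\rangle_g\,S_\eta'(|\nabla u^\epsilon|_g)/|\nabla u^\epsilon|_g\,dv_{\widetilde g}$, in coordinates $u_{;}^{\,i}u_{;ij}N^j$, i.e.\ a \emph{second-order} quantity on $\partial M$. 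A pointwise bound on $\epsilon|\nabla u^\epsilon|$ at the boundary, which is what your barrier is designed to give, does not control it. Moreover the barrier $K\bigl(1-e^{-d_g(x,\partial M)/\epsilon}\bigr)$ cannot even be initialized: no compatibility of $u_0$ with the boundary condition is assumed, so $u_0^\epsilon$ need not be small in the $O(\epsilon)$ collar and the comparison $w\geq u^\epsilon$ fails at $t=0$; and even granting $\epsilon|\partial_N u^\epsilon|\leq K$ on $\partial M$, feeding the constant $cK\vol_{\widetilde g}(\partial M)$ into Gronwall produces an additive term not proportional to $\tv(u_0,M)$, weaker than the asserted form of \eqref{Gronwallnabla}.

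The paper closes this step without any barrier, by a cancellation you are missing. On $\partial M$ one has $u^\epsilon=0$ and $\partial_t u^\epsilon=0$, so \eqref{1} restricted to the boundary gives $\epsilon\Delta_g u^\epsilon=\langle\partial_u f(0),N\rangle_g\,N(u^\epsilon)$; combining the viscous boundary term with the flux boundary term coming from $\dv_g\bigl(\partial_u f(u^\epsilon)S_\eta(|\nabla u^\epsilon|_g)\bigr)$ and using the identity $\Delta_g u^\epsilon=\langle\nabla_N N,\grad_g u^\epsilon\rangle_g+\langle N,\nabla_N\grad_g u^\epsilon\rangle_g$ on $\partial M$ (valid there because $\grad_g u^\epsilon=N(u^\epsilon)N$), the second-derivative pieces cancel and the whole boundary contribution is bounded by $c\,\epsilon\int_{\partial M}|\nabla u^\epsilon|_g\,dv_{\widetilde g}$ with $c=\|\nabla_N N\|_{L^\infty(\partial M)}$. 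This is then converted into interior quantities via the trace inequality $\int_{\partial M}|\nabla u^\epsilon|_g\,dv_{\widetilde g}\leq\int_M|\Delta_g u^\epsilon|\,dv_g$ (which holds because $u^\epsilon$ vanishes on $\partial M$, cf.\ \cite{BardosBrezis}), and once more by \eqref{1}: $\epsilon\|\Delta_g u^\epsilon\|_{L^1(M)}\leq\|\partial_t u^\epsilon\|_{L^1(M)}+c'\|\nabla u^\epsilon\|_{L^1(M)}$, terms already controlled by \eqref{partialt} and absorbable by Gronwall; this is also what produces the $(1+c_2t)\tv(u_0,M)+o(1)$ structure, the $o(1)$ coming from $\|\nabla u_0^\epsilon\|_{L^1}\to\tv(u_0,M)$. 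Without this (or an equivalent) mechanism for the second-order boundary term, your argument for \eqref{Gronwallnabla}, and hence for \eqref{eq:tvestimate}, does not close; the final assembly of \eqref{eq:tvestimate} from the three estimates is otherwise fine.
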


\begin{proof}
Since there exist analogous proofs for problems without boundary conditions on manifolds, 
we will emphasize the argumentation
for the boundary terms and refer to literature for the rest.

Concerning the $L^\infty$-estimate \eqref{behLinfty} note that the proof from
\cite[pp. 131-139, Theorem 8.4]{Malek} can be transferred easily to our case, i.e. showing that $\sup_{M \times (0,T)} u_\epsilon \leq
\max\{\text{ess}\sup_M u_0,\, 0\}$ we multiply (\ref{1}) by $\Phi_\zeta'(u^\epsilon)$ where
\begin{align*}
 \Phi_\zeta(z):= \begin{cases}
\left((z-m)^2+\zeta^2\right)^{1/2}-\zeta &\text{ if } z\geq m,\\
0 &\text{ if } z < m,
\end{cases}
\end{align*}
 $m:=\max\{\text{ess}\sup_M u_0,\,0\}$ and $\zeta > 0$, integrate over $M\times (0,t)$ with $t \in (0,T)$ and let $\zeta$ tend to zero. Similarly we get
$\inf_{M \times (0,T)} u_\epsilon \geq \min\{\text{ess}\inf_M u_0,\, 0\}$.
%

For the estimates of the time derivative and the total variation we proceed as in \cite{MuellerLengeler} and define a function
 $S_\eta: \re \rightarrow \re_{\geq 0}$ for $\eta > 0$ by
 \begin{align*}
S_\eta(z):= \begin{cases}
-z&\text{ if } z < -\eta,\\
\frac{z^2}{2 \eta}+\frac{\eta}{2} &\text{ if } |z| \leq \eta,\\
z &\text{ if } z> \eta.
\end{cases}
\end{align*}
The proof of \eqref{partialt} can be transferred from \cite[1022-1023]{BardosLeroux}, i.e.
taking the time derivative of \eqref{1}, multiplying with $S_\eta^\prime(\partial_tu^\epsilon)$, integrating over
$M\times(0,t)$ for $t\in(0,T)$ and letting $\eta$ tend to zero. Note, that all boundary terms vanish due to
our homogeneous boundary condition \eqref{3}.

As the proof of \eqref{Gronwallnabla} is a little more involved due to the boundary treatment, it will be presented in detail here.
From now on we will write $u$ instead of $u^\epsilon$ for better readability.
Taking the total covariant derivative of equation (\ref{1}) and using
\eqref{eq:commutator} we obtain
\begin{equation}
\partial_t \nabla u + \nabla \dv_g f(u)
= \epsilon(\Delta_g \nabla u- \ric(\nabla u, \cdot)).
\label{KovAbleitung}
\end{equation}
Multiplying (\ref{KovAbleitung}) by 
$\frac{\nabla u}{|\nabla u|_g} S_\eta'(|\nabla u|_g)$ and integration over $M$ leads to
\begin{align}
&\frac{d}{dt} \int_M  S_\eta(|\nabla u|_g)\ dv_g
+ \int_M\langle \nabla \, \dv_g f(u), \nabla u\rangle_g \frac{S_\eta'(|\nabla u|_g)}{|\nabla u|_g}\,
dv_g \notag \\
=&\epsilon\, \int_M \frac{S_\eta'(|\nabla u|_g)}{|\nabla u|_g}(\langle \Delta_g \nabla u, \nabla u\rangle_g
-\ric(\nabla u, \nabla u))\, dv_g. \label{skalar}
\end{align}
From the lines of \cite[pp. 1721-1723, Proposition 5.3]{MuellerLengeler} we know that
\begin{align*}
   \langle \nabla \, \dv_g f(u), \nabla u\rangle_g \frac{S_\eta'(|\nabla u|_g)}{|\nabla u|_g}\,
 \notag =
\left\langle \left( \nabla_{\frac{\grad_g \, u}{|\grad_g u|_g}} \partial_u f\right)(u), \, \grad_g\, u \right\rangle_g\, 
S_\eta'(|\nabla u|_g)\\
+ \dv_g\left (\partial_u f(u)\, S_\eta(|\nabla u|_g)\right)
+ \dv_g(\partial_uf(u))\,\left (|\nabla u|_g \, S_\eta ' (|\nabla u|_g) - S_\eta(|\nabla u|_g)\right ).
\end{align*}
%
%
Concerning the first term on the right-hand side of \eqref{skalar} integration by parts yields
\begin{equation}\label{eq2}
 \begin{aligned}
\int_M  \frac{S_\eta'(|\nabla u|_g)}{|\nabla u|_g} \langle \Delta_g\, \nabla u, \, \nabla u\rangle_g \, dv_g
=& - \int_M \left\langle \nabla^2u, \, \nabla \left(\frac{S_\eta'(|\nabla u|_g)}{|\nabla u|_g} \nabla u\right)\right\rangle_g \, dv_g\\
&+\int_{\partial M} \langle \nabla u \otimes N, \nabla^2 u\rangle_g \frac{S_\eta'(|\nabla u|_g)}{|\nabla u|_g}dv_{\widetilde g},
\end{aligned}
\end{equation}
where in local coordinates $ \langle \nabla u \otimes N, \nabla^2 u\rangle_g = 
 u_{;}^{\, i}\,  u_{;ij}\, N^j.$
From \cite[pp. 1721-1723, Proposition 5.3]{MuellerLengeler} we obtain positivity for the integrand of the
first term of the right-hand side of (\ref{eq2}).
Applying the divergence theorem on $\int_M \dv_g\left (\partial_u f(u)\, S_\eta(|\nabla u|_g)\right)\, dv_g$ and letting $\eta$ tend to zero in \eqref{skalar} we obtain
\begin{equation}\label{etazero}
\begin{aligned}
\frac{d}{dt} \int_M |\nabla u|_g\, dv_g
+\int_M \left\langle \left (\nabla_{\frac{\grad_g u}{|\grad_g u|_g}} \partial_u f\right )(u)
, \grad_g u\right\rangle_g\, dv_g\qquad\quad\\
\leq - \epsilon \int_M \ric (\nabla u , \frac{\nabla u}{|\nabla u|_g}) \, dv_g + \liminf_{\eta\searrow0}I^\eta.
\end{aligned}
\end{equation}
with
\begin{equation*}
 I^\eta:=\int_{\partial M} \epsilon\,  \left\langle \nabla u \otimes N, \nabla^2 u\right\rangle_g \frac{S_\eta'(|\nabla u|_g)}{|\nabla u|_g}- 
\left\langle \partial_u f(0),N\right\rangle_g 
\, S_\eta(|\nabla u|_g) \, dv_{\widetilde g}.
\end{equation*}
In order to study the limit $\eta \searrow 0$ for $I^\eta$ we do some transformations first.
By the fact that $\grad_gu =N(u)N$ on $\partial M$ and consequently $|\nabla u|_g =|N(u)|$, 
 as $u\equiv0$ on $\partial M$, we have on $\partial M$
\begin{align*}
  \left\langle \nabla u \otimes N, \nabla^2 u\right\rangle_g =
  N(u) \,  \langle  N, \nabla_N \, \grad_g u\rangle_g.
\end{align*}
Considering the regularized conservation law (\ref{1}) on $\partial M$, we obtain 
\begin{align*}
\langle \partial_u f(0), \, N\rangle_g \, N(u) = \epsilon\, \Delta_g u.
\end{align*}
Thus,
\begin{align*}
 I^\eta= \epsilon\,  \int_{\partial M}  S_\eta'(|N(u)|_g)\,  \frac{N(u)}{|N(u)|_g}\,   \langle  N, \nabla_N \, \grad_g u\rangle_g
-\Delta_g u \, \frac{S_\eta(|N( u)|_g)}{N(u)} \, dv_{\widetilde g}.
\end{align*}
For $\eta \searrow 0$ we obtain
\begin{align}
 \liminf_{\eta \searrow 0}|I^\eta|
\leq \epsilon \, \int_{\partial M} \left|\langle  N, \nabla_N \,\grad_g u\rangle_g -\Delta_g u\right|_g  \, dv_{\widetilde g}. \label{absr}
\end{align}
In geodesic boundary coordinates centered in $x \in \partial M$ we have
\begin{align*}
 \Delta_gu(x)= \sum_{i=1}^{n}
 \partial_i^2 u(x)
 = \partial_n^2u(x)
\end{align*}
regarding the fact that $u \equiv 0$ on $\partial M$.
Extending $N$ onto a small neighborhood of $\partial M$ by
$N=-\partial_n$
we obtain at $x \in \partial M$
\begin{align*}
\Delta_g u= N (N(u))= N( \langle N, \grad_g u \rangle_g)
= \langle \nabla_N N,\grad_g u \rangle_g + \langle N, \nabla_N \grad_g u \rangle_g.
\end{align*}
Note that the above expression is independent of the choice of coordinates.
Thus,
\begin{align*}
 \liminf_{\eta\searrow0}|I^\eta|
\leq  c\, \epsilon \,  \int_{\partial M}| \nabla u|_g \, dv_{\widetilde g}
\end{align*}
with $c:=\|\nabla_N N\|_{L^\infty(\partial M)} <\infty$ since $N$ is smooth and $\partial M$ compact.
A repetition of the proof for the Euclidean case \cite[p. 92, Lemma A.3]{BardosBrezis} yields the 
analogous result,
\begin{align*}
 \int_{\partial M} |\nabla u|_g \, dv_{\widetilde g} \leq \int_M |\Delta_g u|\, dv_g,
\end{align*}
on manifolds. Using again equation (\ref{1}), we obtain
\begin{align*}
  c\, \epsilon \,  \int_{\partial M}| \nabla u|_g \, dv_{\widetilde g} 
  \leq c' \left( \|\partial_t u\|_{L^1(M)}+ \|\nabla u\|_{L^1(M)}\right) 
\end{align*}
 for a constant $c'=c'(\bar M,g,T,f,\|u_0\|_{L^\infty(M)})>0$.
Since $\ric(\nabla u, \nabla u) \leq c'' |\ric|_g |\nabla u|_g^2$ with a constant $c''=c''(\bar M,g,T)>0$, 
we obtain from \eqref{etazero}
\begin{align*}
&\frac{d}{dt} \int_M |\nabla u|_g\, dv_g 
\leq c_3 \|\nabla u\|_{L^1(M)}+ c' \|\partial_t u\|_{L^1(M)}\\
\end{align*}
with
\begin{equation*}
 c_3:=\sup_{\bar u, X}\|\langle\nabla_{X} \partial_u f(\bar u),X\rangle_g\|_{L^\infty(M\times(0,T))}+  \epsilon \|\ric\|_{L^\infty(M)}+c',
\end{equation*}
where the supremum is taken over all real numbers $|\bar u|\leq\|u_0\|_{L^\infty(M)}$
and all smooth vector fields $X$ with $|X|_g\leq1$.
Integration over $(0,t)$ together with (\ref{partialt}) and \eqref{L1mollificationTV} yields for almost every $t \in (0,T)$
\begin{align*}
 \int_M |\nabla u(\cdot, t)|_g \, dv_g 
&\leq \| \nabla u^\epsilon_0\|_{L^1 (M)}+ c_3 \int_0^t \| \nabla u(\cdot,\tau)\|_{L^1(M)} \, d\tau+ tc'c_1 \tv(u_0,M)\\
&\leq (1+c_2 t) \tv(u_0, M) + o(1) + c_3 \int_0^t \| \nabla u(\cdot,\tau)\|_{L^1(M)} \, d\tau
\end{align*}
with $c_2:=c'c_1$ 
and $o(1)\rightarrow 0$ for $\epsilon\searrow0$.
Finally, Gronwall's Inequality completes the proof of \eqref{Gronwallnabla} and consequently of  \eqref{eq:tvestimate}.

\end{proof}

An application of Theorem \ref{thmKondrakov} leads to the following corollary of Theorem \ref{thmH11}.
\begin{corollary}[Viscosity limit]
 There is a subsequence
$(u^{\epsilon_j})_{j \in \mathbb N}$ of $\{u^\epsilon\}_{\epsilon > 0}$ and a function $u \in L^1(M\times (0,T))$
such that $\|u^{\epsilon_j} - u \|_{L^1(M\times (0,T))} \rightarrow 0$ for $j\rightarrow \infty$.
Such a limit function $u$ is called viscosity limit of (\ref{1})-(\ref{3}).
\end{corollary}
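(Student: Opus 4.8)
The plan is to combine the uniform bounds from Theorem \ref{thmH11} with the compact embedding of Theorem \ref{thmKondrakov} applied to the product manifold $\bar M\times[0,T]$, which is itself a compact Riemannian manifold with Lipschitz boundary under the metric $g_T$. First I would record that estimates \eqref{behLinfty}--\eqref{eq:tvestimate} give a bound $\|u^\epsilon\|_{H^{1,1}(M\times(0,T))}<c_4$ uniform in $\epsilon$; in particular $\{u^\epsilon\}_{\epsilon>0}$ is bounded in $H^{1,1}$ of the $(n+1)$-dimensional manifold $M\times(0,T)$.

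Next I would invoke Theorem \ref{thmKondrakov} with $n$ replaced by $n+1$, $p=1$, and $q=1$: the condition $1\ge 1/q>1/p-1/(n+1)>0$ reads $1\ge 1>1-1/(n+1)=n/(n+1)>0$, which holds, so the embedding $H^{1,1}(M\times(0,T))\embedding L^1(M\times(0,T))$ is compact. Hence the bounded set $\{u^\epsilon\}_{\epsilon>0}\subset H^{1,1}(M\times(0,T))$ is precompact in $L^1(M\times(0,T))$. Choosing any sequence $\epsilon_j\searrow0$, the corresponding functions $u^{\epsilon_j}$ form a bounded sequence in $H^{1,1}$, so a subsequence (not relabeled) converges in $L^1(M\times(0,T))$ to some limit $u\in L^1(M\times(0,T))$, i.e. $\|u^{\epsilon_j}-u\|_{L^1(M\times(0,T))}\to0$ as $j\to\infty$. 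This is exactly the assertion, and one then declares such a $u$ the viscosity limit of \eqref{1}--\eqref{3}.

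The only real point requiring care is the verification that $\bar M\times[0,T]$ with the metric $g_T=dt^2+g_{ij}dx^i dx^j$ satisfies the hypotheses of Theorem \ref{thmKondrakov}: it is compact, and its boundary $(\partial M\times[0,T])\cup(\bar M\times\{0,T\})$ is Lipschitz (indeed piecewise smooth with the corners $\partial M\times\{0,T\}$), so the embedding theorem applies. Everything else is a routine extraction of a convergent subsequence from a precompact set, so I do not expect any genuine obstacle; the statement is essentially a direct corollary of the two preceding theorems.
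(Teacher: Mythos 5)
Your proposal is correct and is essentially the paper's own argument: the paper likewise endows $M\times(0,T)$ with $g_T$, uses the uniform $H^{1,1}$ bound from Theorem \ref{thmH11}, and applies the compact embedding of Theorem \ref{thmKondrakov} to extract an $L^1$-convergent subsequence. Your explicit check of the exponent condition with $n+1$ in place of $n$ and of the Lipschitz regularity of the product boundary just makes precise what the paper leaves implicit.
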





 \subsection{Existence of an entropy solution}\label{sec:existence}
First, we motivate a formulation of boundary conditions analogously to \cite[126-127]{BardosLeroux}.
For a moment, assume the flux function
$f=f(u,x,t)$ to be monotone in $u$.
In this case, outflow boundary points $x\in\partial M$ at time $t\in(0,T)$ are characterized by the property
\begin{align}
\left \langle \frac{f(Tu)-f(k)}{Tu-k}, N \right\rangle_g > 0\quad \forall k \in \re \label{outflow}
\end{align}
where $Tu$ denotes the trace of $u$ on $\partial M$ and inflow boundaries are characterized conversely.
We want to have a  boundary condition which 
\begin{enumerate}
\item requires $u=0$ on $\partial M$ if the data are entering $\bar M$, which means that $u$ is not determined by the initial data or some other boundary data and
\item is a trivial condition if the data are leaving $\bar M$.
\end{enumerate}
This is ensured by the following boundary condition:
\begin{align}
\min_{k \in I(Tu,0)} \{\sgn(Tu)\, \langle f(Tu)- f(k), N\rangle_g \}=0 \label{boundarycondition}
\end{align}
almost everywhere on $\partial M\times (0,T)$ with
$I(Tu, 0):= [ \min\{Tu,0\}, \max\{Tu,0\}]$.

Although these explanations only work for monotone functions $f$, we will see that (\ref{boundarycondition}) is a valid  boundary condition for
problem (\ref{11}), (\ref{12}), even if $f$ is not monotone.\\

We now give the definition for an entropy solution of problem (\ref{11}), (\ref{12}), (\ref{boundarycondition}).
\begin{definition}[Entropy solution] \label{defiLoesungsbegriff}
We call a function $u \in \BV(M \times (0,T))\cap L^\infty(M\times (0,T))$ entropy solution of problem 
\eqref{11}, \eqref{12}, \eqref{boundarycondition}
if for every $k \in \re$ and every $\phi \in C_0^\infty (\bar M \times (0,T))$, 
$ \phi(x,t)\geq 0$ the inequality
\begin{equation}
\label{entrol}
\begin{aligned}
\int_M \int_0^T  |u -k| \, \partial_t \phi
 +\sgn (u-k)\,  \langle f(u)- f(k), \grad_g  \phi \rangle_g    \, dt\, dv_g\\
 +\int_{\partial M} \int_0^T \sgn(k) \,  \langle f(Tu)-f(k) , N\rangle_g \, \phi \, dt \, dv_{\widetilde g} \geq 0 
\end{aligned}
\end{equation}
 holds and the initial condition  (\ref{12}) is fulfilled by the trace $Tu|_{t=0}$ almost everywhere in $M$.
\end{definition}

\begin{remark}
Setting $k=\sup_{M \times (0,T)} |u(x,t)|$ and $k=\inf_{M \times (0,T)} |u(x,t)|$ in (\ref{entrol}) it is easy to prove that an entropy solution 
is a weak solution of (\ref{11}).
The fact that \eqref{entrol} implies the boundary condition \eqref{boundarycondition}
is proved by setting $\phi= R_\delta\bar \phi$ in \eqref{entrol} with 
$\bar \phi \in C_0^\infty(\bar M \times (0,T))$, $\bar \phi \geq 0$ and $R_\delta$ as in (\ref{rhodelta}) and letting $\delta$ tend to zero.
%
\end{remark}

\begin{theorem}[Existence of an entropy solution]\label{thmexistence}
Any viscosity limit $u$ of (\ref{1})-(\ref{3})  is an entropy solution of problem (\ref{11}), (\ref{12}), (\ref{boundarycondition}).
\end{theorem}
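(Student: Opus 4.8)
The plan is to pass to the limit $\epsilon_j \searrow 0$ in the regularized equation \eqref{1} tested against a suitable entropy--admissible test function, following the structure of the vanishing viscosity argument in \cite{BardosLeroux}, and to carefully track the boundary contributions using the trace theory from Section \ref{sec:traces}. First I would fix $k \in \setR$, a nonnegative $\phi \in C_0^\infty(\bar M \times (0,T))$, and a smooth convex approximation $\eta_\zeta$ of $|\cdot - k|$ (for instance built from $S_\eta$ as above, or a standard $C^2$ mollification). Multiplying \eqref{1} by $\eta_\zeta'(u^{\epsilon})\phi$ and integrating over $M \times (0,T)$ gives, after integration by parts in $t$ and in the spatial variables,
\begin{align*}
\int_0^T\!\!\int_M \eta_\zeta(u^\epsilon)\,\partial_t\phi + \eta_\zeta'(u^\epsilon)\langle f(u^\epsilon), \grad_g\phi\rangle_g \, dv_g\, dt + (\text{flux derivative terms}) \\
= \epsilon \int_0^T\!\!\int_M \eta_\zeta'(u^\epsilon)\,\Delta_g u^\epsilon\, \phi\, dv_g\, dt + (\text{boundary terms on } \partial M),
\end{align*}
where the $\dv_g f = 0$ hypothesis is used to rewrite $\eta_\zeta'(u^\epsilon)\langle f(u^\epsilon),\grad_g\phi\rangle_g$ in terms of $\langle q_\zeta(u^\epsilon,x,t),\grad_g\phi\rangle_g$ plus lower--order terms that vanish in the entropy inequality limit (as in \cite{MuellerLengeler}); I would quote that computation rather than redo it, emphasizing only that the new feature is the boundary.

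The viscous right--hand side is handled as usual: integrating by parts once more, $\epsilon\int \eta_\zeta'(u^\epsilon)\Delta_g u^\epsilon\,\phi = -\epsilon\int \eta_\zeta''(u^\epsilon)|\grad_g u^\epsilon|_g^2\phi - \epsilon\int\eta_\zeta'(u^\epsilon)\langle\grad_g u^\epsilon,\grad_g\phi\rangle_g + \epsilon\int_{\partial M}\eta_\zeta'(u^\epsilon)N(u^\epsilon)\phi\,dv_{\widetilde g}$. Convexity of $\eta_\zeta$ makes the first term $\leq 0$, hence droppable in the inequality; the second term is $O(\epsilon \|\grad_g u^\epsilon\|_{L^1})$ by the uniform bound \eqref{Gronwallnabla}, so it vanishes. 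For the boundary term, since $u^\epsilon = 0$ on $\partial M$ by \eqref{3}, we have $\eta_\zeta'(u^\epsilon) = \eta_\zeta'(-k)=\eta_\zeta'(0-k)$ there, and as $\zeta \to 0$ this tends to $-\sgn(k)$ (with the convention $\sgn(0)=0$); moreover $\epsilon\int_{\partial M}|N(u^\epsilon)|\,dv_{\widetilde g} = \epsilon\int_{\partial M}|\grad_g u^\epsilon|_g\,dv_{\widetilde g} \leq c\epsilon(\|\partial_t u^\epsilon\|_{L^1(M)} + \|\grad_g u^\epsilon\|_{L^1(M)})$ is uniformly bounded by the estimates in Theorem \ref{thmH11}, so one must keep this term but argue that the viscous factor $\epsilon$ together with the boundary trace estimate $\int_{\partial M}|\grad_g u^\epsilon|_g \leq \int_M |\Delta_g u^\epsilon|$ from \cite{BardosBrezis} forces it to converge not to zero but to exactly the prescribed boundary flux term $\int_{\partial M}\int_0^T \sgn(k)\langle f(Tu)-f(k),N\rangle_g\phi\,dv_{\widetilde g}\,dt$. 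Concretely, using \eqref{1} on $\partial M$ one rewrites $\epsilon N(u^\epsilon) = \epsilon\Delta_g u^\epsilon - \epsilon\,\dv_g f(u^\epsilon) + \langle\partial_u f(0),N\rangle_g N(u^\epsilon)$-type identities (as done in the proof of Theorem \ref{thmH11}) to trade the viscous boundary derivative for a flux expression, then pass $\epsilon_j \searrow 0$.

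The convergences of the interior terms are standard once $u^{\epsilon_j} \to u$ in $L^1(M\times(0,T))$ (hence a.e. along a further subsequence): continuity of $\eta_\zeta$, $f$, and the flux entropy pair let me pass to the limit, then send $\zeta \to 0$ by dominated convergence, using the $L^\infty$ bound \eqref{behLinfty} and the dominated convergence theorem. The initial condition $Tu|_{t=0} = u_0$ a.e. follows from the uniform bound on $\|\partial_t u^\epsilon\|_{L^1(M)}$ in \eqref{partialt}, which gives equicontinuity of $t \mapsto u^\epsilon(\cdot,t)$ in $L^1(M)$ uniformly in $\epsilon$, together with \eqref{L1mollification}; alternatively one absorbs it into the weak formulation by testing with $\phi$ not vanishing at $t=0$. \textbf{The main obstacle} I anticipate is the boundary term: showing that $\lim_{\epsilon_j\searrow0}\epsilon_j\int_0^T\int_{\partial M}\eta_\zeta'(u^{\epsilon_j})N(u^{\epsilon_j})\phi\,dv_{\widetilde g}\,dt$ equals the correct nonnegative boundary contribution $\int_0^T\int_{\partial M}\sgn(k)\langle f(Tu)-f(k),N\rangle_g\phi\,dv_{\widetilde g}\,dt$ rather than simply vanishing — this is where the trace of $u$ enters, where the BV bound on $u^\epsilon$ is essential, and where one must adapt the delicate boundary layer analysis of \cite{BardosLeroux} to the Riemannian setting, invoking Corollary \ref{korEigenschaftenTu} to identify $Tu$ and the geodesic boundary coordinate computations of the previous proof to control $\epsilon\,N(u^\epsilon)$ on $\partial M$.
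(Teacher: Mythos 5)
Your overall skeleton matches the paper's: multiply \eqref{1} by an approximation of $\sgn(\cdot-k)$ times $\phi$, integrate by parts, drop the nonnegative viscous term coming from convexity, kill the $\epsilon\langle\grad_g u^\epsilon,\grad_g\phi\rangle_g$ term with \eqref{Gronwallnabla}, use Lemma \ref{lemSaks} (your ``flux derivative terms'') in the $\eta$-limit, and handle the initial condition via \eqref{partialt} and \eqref{L1mollification} as in Corollary \ref{corAnfangsbed}. The problem is that the one step you yourself flag as the main obstacle --- showing that
$\epsilon\int_0^T\int_{\partial M}N(u^\epsilon)\,\sgn(k)\,\phi\,dv_{\widetilde g}\,dt$ converges to $-\sgn(k)\int_0^T\int_{\partial M}\langle f(Tu)-f(0),N\rangle_g\,\phi\,dv_{\widetilde g}\,dt$, which combined with the explicit boundary term $-\sgn(k)\int\int\langle f(0)-f(k),N\rangle_g\phi$ produces the boundary integral in \eqref{entrol} --- is left open, and the mechanism you hint at does not do the job. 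The pointwise relation on $\partial M$ coming from \eqref{1} (namely $\langle\partial_u f(0),N\rangle_g N(u^\epsilon)=\epsilon\Delta_g u^\epsilon$, used in the proof of Theorem \ref{thmH11}) only ties the first normal derivative to the second one at fixed $\epsilon$; it gives no control of the $\epsilon\searrow0$ limit of $\epsilon N(u^\epsilon)$, and the ``$\epsilon N(u^\epsilon)=\epsilon\Delta_g u^\epsilon-\epsilon\dv_g f(u^\epsilon)+\dots$''-type identity you write is not a correct identity at all.

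The paper's actual route is different and is the heart of the proof: insert the cutoff $R_\delta$ from \eqref{rhodelta} and apply the divergence theorem backwards, so that the boundary integral becomes the interior integral $\epsilon\int\int\Delta_g u^\epsilon\,\phi R_\delta+\epsilon\int\int\langle\grad_g u^\epsilon,\grad_g(\phi R_\delta)\rangle_g$; then replace $\epsilon\Delta_g u^\epsilon$ by $\partial_t u^\epsilon+\dv_g f(u^\epsilon)$ using \eqref{1} in the interior, pass $\epsilon_j\searrow0$ with the $L^1$-convergence and the uniform bounds, integrate by parts once more, and finally send $\delta\searrow0$ using Lemma \ref{lemSpurFluss} to identify $\lim_{\delta\searrow0}\int_M\langle f(u),\grad_g R_\delta\rangle_g\phi\,dv_g=\int_{\partial M}\langle f(Tu),N\rangle_g\phi\,dv_{\widetilde g}$. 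Lemma \ref{lemSpurFluss} is itself a nontrivial ingredient: it needs Lemma \ref{korfBVM} (so that $f(u,\cdot)\in BV$), Corollary \ref{kordirFolgerungSpur}, and Corollary \ref{korEigenschaftenTu}(\ref{CorTraceComm}) to commute the trace with the nonlinearity, i.e. $T[f(u)]=f(Tu)$. None of this is supplied or replaced by an equivalent argument in your proposal, so as it stands the identification of the boundary contribution in \eqref{entrol} --- precisely the new feature compared with the boundaryless case --- is a genuine gap.
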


 To prove Theorem \ref{thmexistence}, we will use the following Corollary \ref{corAnfangsbed} and Lemma \ref{lemSpurFluss}.
 
 \begin{corollary} \label{corAnfangsbed}
 Any viscosity limit $u$ of (\ref{1})-(\ref{3})  belongs to $\BV(M\times (0,T)) \cap L^\infty(M \times (0,T))$ and has a trace for $t=0$ satisfying the initial 
 condition (\ref{12}) almost everywhere on $M$.
\end{corollary}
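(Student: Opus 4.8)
The plan is to establish the three claims in turn: that $u\in L^\infty(M\times(0,T))$, that $u\in\BV(M\times(0,T))$, and finally that the trace of $u$ at $t=0$ coincides with $u_0$ almost everywhere. The first claim is immediate: by Theorem \ref{thmH11}, inequality \eqref{behLinfty}, we have $\|u^{\epsilon_j}\|_{L^\infty(M\times(0,T))}\le\|u_0\|_{L^\infty(M)}$ uniformly in $j$, and since $u^{\epsilon_j}\to u$ in $L^1(M\times(0,T))$, a subsequence converges almost everywhere, so the same bound passes to $u$. For the $\BV$ claim, I would combine the uniform estimates \eqref{partialt} and \eqref{Gronwallnabla}: integrating \eqref{partialt} and \eqref{Gronwallnabla} over $t\in(0,T)$ gives a bound on $\|u^{\epsilon_j}\|_{H^{1,1}(M\times(0,T))}$ independent of $j$, which is precisely \eqref{eq:tvestimate}. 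Since $u^{\epsilon_j}\to u$ in $L^1$, lower semicontinuity of the total variation (apply the definition of $\tv(\,\cdot\,,M\times(0,T))$ as a supremum over test vector fields, and pass to the limit inside each integral $\int u^{\epsilon_j}\dv_{g_T}X\,dv_{g_T}$) yields $\tv(u,M\times(0,T))<\infty$, hence $u\in\BV(M\times(0,T))$.

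The substantive part is the initial condition. By Lemma \ref{lemProduktBV}, since $u\in\BV(M\times(0,T))$, the slice $u(\cdot,t)$ lies in $\BV(M)$ for a.e.\ $t$, and $u(x,\cdot)\in\BV((0,T))$ for a.e.\ $x$; in particular $u$ has a well-defined trace at $t=0$ (viewing $M\times\{0\}$ as part of the boundary of the product manifold, or simply using that a $\BV$ function of one variable has a one-sided limit at the endpoint). To identify this trace with $u_0$, I would control $\int_M|u^{\epsilon_j}(x,t)-u_0(x)|\,dv_g$ for small $t$ uniformly in $j$. Writing $u^{\epsilon_j}(x,t)-u^{\epsilon_j}_0(x)=\int_0^t\partial_\tau u^{\epsilon_j}(x,\tau)\,d\tau$ and using \eqref{partialt}, we get $\|u^{\epsilon_j}(\cdot,t)-u^{\epsilon_j}_0\|_{L^1(M)}\le c_1 t\,\tv(u_0,M)$; combined with $\|u^{\epsilon_j}_0-u_0\|_{L^1(M)}\to 0$ from \eqref{L1mollification}, this gives $\|u^{\epsilon_j}(\cdot,t)-u_0\|_{L^1(M)}\le c_1 t\,\tv(u_0,M)+o(1)$. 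Integrating over $(0,t_0)$ and letting $j\to\infty$ (using $L^1(M\times(0,T))$ convergence together with Fubini) yields $\int_0^{t_0}\|u(\cdot,t)-u_0\|_{L^1(M)}\,dt\le c_1 t_0^2\,\tv(u_0,M)$, hence $\frac1{t_0}\int_0^{t_0}\|u(\cdot,t)-u_0\|_{L^1(M)}\,dt\to 0$ as $t_0\searrow 0$.

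Finally, since the trace $Tu|_{t=0}$ is, for a.e.\ $x$, the one-sided limit $\lim_{t\searrow0}u(x,t)$ along the $\BV$ slice $u(x,\cdot)$, a Lebesgue-point style argument (as in Corollary \ref{korEigenschaftenTu}, applied in the time variable, or directly from the averaged estimate just obtained) identifies $Tu|_{t=0}$ with the $L^1$-limit of the averages $\frac1{t_0}\int_0^{t_0}u(\cdot,t)\,dt$, which we have shown converges to $u_0$ in $L^1(M)$. Therefore $Tu|_{t=0}=u_0$ almost everywhere in $M$, which is \eqref{12}. The main obstacle is the bookkeeping in passing the uniform-in-$j$ estimate through the $L^1$ limit while simultaneously extracting the $t\searrow0$ behaviour; this is handled cleanly by first fixing $t_0$, averaging in time, then sending $j\to\infty$, and only afterwards letting $t_0\searrow0$.
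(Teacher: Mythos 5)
Your proposal is correct and follows essentially the same route as the paper: the $L^\infty$ and $\BV$ claims come from the uniform estimates of Theorem \ref{thmH11} combined with lower semicontinuity of the total variation under $L^1$-convergence, and the initial condition is recovered from \eqref{partialt} and \eqref{L1mollification} together with the fact that the trace at $t=0$ is attained as the $L^1(M)$-limit of the time slices. The only difference is cosmetic: you average in $t$ first and then send $j\to\infty$ via Fubini, whereas the paper fixes $t$, uses a four-term triangle inequality through $u^{\epsilon_j}(\cdot,t)$ and $u_0^{\epsilon_j}$, and lets first $j\to\infty$ and then $t\searrow 0$.
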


\begin{proof}
 Since (\ref{behLinfty}), (\ref{partialt}) and \eqref{Gronwallnabla} hold and since $\tv$ is lower semicontinuous w.r.t $L^1$-convergence we obtain 
 \begin{align*}
  u \in \BV(M \times (0,T)) \cap L^\infty(M \times (0,T)).
 \end{align*}
Let $Tu|_{t=0}$ be the trace of $u$ for $t=0$ whose existence is ensured by Theorem \ref{SpursatzM}.
Then we have for $(u^{\epsilon_j})_{j\in \na}$, a converging subsequence of $\{u^\epsilon\}_{\epsilon>0}$,
\begin{align*}
\|Tu|_{t=0}- u_0\|_{L^1(M)}
\leq&\| Tu|_{t=0} (\cdot) - u(\cdot, t)\|_{L^1(M)}+ 
\|u(\cdot, t)- u^{\epsilon_j}(\cdot, t)||_{L^1(M)}\\
&+
\| u^{\epsilon_j}(\cdot,t)-u_0^{\epsilon_j}(\cdot)\|_{L^1(M)}+
\|u_0^{\epsilon_j}-u_0\|_{L^1(M)}.
\end{align*}
Since (\ref{partialt}) holds inedependently of $\epsilon_j$, the third term of the right hand side is bounded by $tc$ with a constant $c>0$. Thus, letting first $j$ tend to $\infty$ and then $t$ to zero, we obtain
\begin{align*}
 \lim_{t \searrow 0} \|Tu|_{t=0}- u_0\|_{L^1(M)}=0,
\end{align*}
analogously, to the Euclidean case in \cite[pp. 125-126]{BardosLeroux}.
\end{proof}

\begin{lemma}\label{lemSpurFluss}
Let $u \in \BV(M)\cap L^\infty(M)$, 
 $R_\delta$ as in (\ref{rhodelta}) and  $\phi \in C_0^\infty(\bar M)$. Then we have
\begin{align}
 \lim_{\delta \searrow 0}\int_M \langle f(u), \grad_g R_\delta\rangle_g \, \phi\, dv_g =  \int_{\partial M} 
 \langle f(Tu), N\rangle_g\, \phi\, dv_{\widetilde g}.\label{spursL1}
\end{align}

\end{lemma}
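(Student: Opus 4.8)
The plan is to reduce the statement to Corollary \ref{kordirFolgerungSpur} by identifying the correct $BV$ function and vector field. First I would observe that $f(u)=f(u(\cdot),\cdot)$ is, for fixed $u\in BV(M)\cap L^\infty(M)$, a vector field on $\bar M$ whose coefficients are of the form $F(u(x),x)$ with $F\in C^1(\re\times\bar M)$. By Lemma \ref{korfBVM}, each such coefficient lies in $BV(M)$; moreover since $u\in L^\infty(M)$ these coefficients are bounded. The smooth cut-off $\phi\in C_0^\infty(\bar M)$ can be absorbed either into the test field or kept as a multiplier. The key observation is that
\[
 \langle f(u),\grad_g R_\delta\rangle_g\,\phi = \langle \phi\, f(u), \grad_g R_\delta\rangle_g,
\]
so the left-hand side of \eqref{spursL1} is exactly the integral appearing in Corollary \ref{kordirFolgerungSpur}, provided we read it componentwise against the $BV$ scalar functions $\phi\,(f(u))^i$ and the smooth vector fields $\partial_i$ in a local frame, or — more cleanly — provided we interpret the corollary with $u$ replaced by a scalar $BV$ function and $X$ by a smooth field, then sum.

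The key steps, in order, are: (i) fix a finite atlas of geodesic boundary coordinate charts $\{(U_i,\varphi_i)\}$ covering a neighbourhood of $\partial M$ together with a subordinate partition of unity $\{\psi_i\}$, since outside such a neighbourhood $\grad_g R_\delta\equiv0$ for small $\delta$ by \eqref{rhodelta}; (ii) on each chart write $\langle f(u),\grad_g R_\delta\rangle_g\,\phi\,\psi_i = \sum_j (\psi_i\phi\,(f(u))^j)\,\langle \partial_j, \grad_g R_\delta\rangle_g$ where each scalar $\psi_i\phi\,(f(u))^j\in BV(M)\cap L^\infty(M)$ by Lemma \ref{korfBVM}; (iii) apply Corollary \ref{kordirFolgerungSpur} to each such scalar function against the smooth field $\partial_j$ (extended suitably), obtaining the boundary integral $\int_{\partial M} T(\psi_i\phi\,(f(u))^j)\,\langle \partial_j,N\rangle_g\,dv_{\widetilde g}$ in the limit $\delta\searrow0$; (iv) use the trace composition property, Corollary \ref{korEigenschaftenTu}(3), to identify $T(\phi\,(f(u))^j)=\phi\,(f(Tu))^j$ almost everywhere on $\partial M$, since $\phi$ and the coefficients of $f$ are continuous up to the boundary and depend on $u$ only through the composition $z\mapsto F(z,x)$; (v) sum over $i$ and $j$ and use $\sum_i\psi_i\equiv1$ near $\partial M$ to recover $\int_{\partial M}\langle f(Tu),N\rangle_g\,\phi\,dv_{\widetilde g}$.

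The main obstacle I expect is step (iv): the trace operator $T$ is only known a priori to be linear, and identifying $T[\phi\,F(u(\cdot),\cdot)]$ with $\phi\,F(Tu,\cdot)$ requires a composition rule slightly more general than the one stated in Corollary \ref{korEigenschaftenTu}(3), which is formulated for $h\in C^1([-z,z])$ acting on $u$ alone. One can handle this by working locally: on a coordinate patch the relevant quantity is $T[\bar\phi(x)\,\bar F(\bar u(x),x)]$ for scalar $\bar u\in BV(V)\cap L^\infty(V)$, and the trace characterization via Lebesgue points in Corollary \ref{korEigenschaftenTu}(1) reduces the claim to the pointwise identity $\lim_{\rho\to0}\dashint_{B_\rho} \bar\phi\,\bar F(\bar u,\cdot)\,dv_g = \bar\phi(x_0)\,\bar F(Tu(x_0),x_0)$, which follows from the $L^1$-Lebesgue-point property of $\bar u$ at $\vol_{\widetilde g}$-a.e.\ boundary point together with the Lipschitz continuity of $\bar F$ in its first argument on the compact range of $\bar u$ and the continuity of $\bar\phi$ and $\bar F$ in $x$. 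A minor additional point is ensuring the boundary integrals $S_1^\delta$-type error terms vanish, but these are controlled exactly as in the proof of Theorem \ref{SpursatzM} by Lemma \ref{lemRadonMeasure} and the $L^\infty$-bound on the trace.
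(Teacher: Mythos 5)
Your proposal is correct and follows essentially the same route as the paper: localize near $\partial M$ with geodesic boundary charts and a partition of unity, apply Corollary \ref{kordirFolgerungSpur} componentwise to the flux components (which are in $\BV(M)\cap L^\infty(M)$ by Lemma \ref{korfBVM}), and then identify $T[f^l(u(\cdot),\cdot)]$ with $f^l(Tu,\cdot)$ at $\vol_{\widetilde g}$-a.e.\ boundary point via the Lebesgue-point characterization of Corollary \ref{korEigenschaftenTu}, splitting off the $x$-dependence by continuity --- exactly the paper's argument. The only cosmetic difference is that you absorb the cutoff $\psi_i\phi$ into the scalar $\BV$ function and test against $\partial_j$, whereas the paper takes $X=\phi\psi_i\partial_l$ and therefore must additionally check that the zero extension $\widetilde f^l_i$ has finite total variation, a step your placement of the cutoff sidesteps.
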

\begin{proof}
Without loss of generality we neglect the $t$-dependence of $f$ in the proof.
For $X \in \Gamma(TM)$ and $v\in \BV(M)$ we obtain by Corollary \ref{kordirFolgerungSpur}
\begin{align}
\lim_{\delta \searrow 0} \int_M v \langle \grad_g R_\delta, X\rangle_g\, dv_g
 = \int_{\partial M} Tv \langle X, N\rangle_g \,  dv_{\widetilde g} \label{dirFolgerungSpursatz}.
\end{align}
Let $\{(U_i, \varphi_i)\}_{i \in I}$ be a finite collection of charts which covers $M$ 
and $\{\psi_i\}_{i \in I}$ a partition of unity subordinate to this
cover. For $i \in I$ and $1 \leq l \leq n$ we define
\begin{align*}
 \widetilde f^l_i (x):= \begin{cases}
                         f^l_i(u(x),x) &\text{ if $x \in U_i$,}\\
                         0 &\text{ otherwise}
                        \end{cases}
\end{align*}
where $f_i^l(u(x),x)$ denotes the $l$-th component of $f(u(x),x)$ on $U_i$ 
relating to $\varphi_i$. 
Note that $f_i^l(u,\cdot)\in BV(U_i)$ due to Lemma \ref{korfBVM} and hence an application of Theorem \ref{SpursatzM} yields
\begin{align*}
\tv( \widetilde f^l_i, M)
 &=\sup\left\{-\int_{U_i} \langle X, \sigma_{f^l_i(u, \cdot)}\rangle_g \, d |D f^l_i(u, \cdot)|\right.\\
 &\qquad\quad\ \ \left.+ \int_{\partial U_i}  Tf^l_i(u, \cdot)\langle X, N\rangle_g\, dv_{\widetilde g}
 |\, X\in \Gamma(TM), \, |X|_g \leq 1\right\}\\
 &\leq \tv(f^l_i(u, \cdot), U_i)+ \|Tf^l_i(u, \cdot)\|_{L^\infty(\partial U_i)} \text{vol}_{\widetilde g}(\partial U_i)\\
 &< \infty
\end{align*}
and thus, (\ref{dirFolgerungSpursatz}) is true for $v= \widetilde f^l_i$. Setting $X= \phi\psi_i \partial_l$, where $\partial_l$ is defined by $\varphi_i$, we obtain
\begin{align*}
\int_{U_i}  f^l_i (u, \cdot) \, {R_\delta}_{;}^{\, j}\,  \phi \psi_i \, g_{jl}\, dv_g
&\xrightarrow{\delta \searrow 0} \int_{U_i \cap \partial M} T f^l_i(u, \cdot) \, \phi \psi_i\,  N^j g_{jl} \,  dv_{\widetilde g}.
\end{align*}
After summation over $i\in I$ it remains to show that we can 
replace $Tf_i^l(u,\cdot)$ by $f_i^l(Tu,\cdot)$.
To this end, let $x_0 \in U_i\cap\partial M$ be fixed, meaning $x_0$ is not the variable of integration,
then due to Lemma \ref{korEigenschaftenTu} we have for $\vol_{\widetilde g}$-almost every such $x_0$
\begin{align*}
 &\left| T[f_i^l(u(x_0), x_0)]-  f_i^l(Tu(x_0),x_0) \right| \\
 =&\left|\lim_{\rho\searrow 0} \dashint_{B_\rho^g(x_0)} f_i^l(u(\cdot), \cdot )\, dv_g- \dashint_{B_r^g(x_0)} f_i^l(Tu(x_0),x_0)\, dv_g\right|\\
 \leq& \liminf_{ \rho \searrow 0}  \dashint_{B_\rho^g(x_0)} | f_i^l(u(\cdot), \cdot)-f_i^l(u(\cdot) , x_0)|\, dv_g\\
 &+\liminf_{ \rho \searrow 0} \dashint_{B_\rho^g(x_0)}|f_i^l(u(\cdot), x_0)-f_i^l(Tu(x_0),x_0)|
 \,dv_g=0
 \end{align*}
because of Corollary \ref{korEigenschaftenTu} (\ref{CorTraceComm}) and the regularity of $u$ and $f$.\\
%
%
\end{proof}

\begin{proof}[Proof of Theorem \ref{thmexistence}]
We define an approximation $s_\eta: \re \rightarrow [-1,1]$ of the sign function with
\begin{align*}
s_\eta(z):=S_\eta^\prime(z)= \begin{cases}
-1 &\text{ if $z < -\eta$}\\
\frac{z}{\eta} &\text{ if $|z|\leq \eta$},\\
1 &\text{ if $z > \eta$}.
\end{cases}
\end{align*}
Multiplying (\ref{1}) by $s_\eta(u^\epsilon -k)\phi$,
where $k \in \re$, $\phi \in C_0^\infty(\bar M\times (0,T))$, $\phi \geq 0$
and integration over $M\times (0,T)$ yields
 \begin{equation}
\begin{aligned}
  &\int_M \int_0^T \left[ \int_k^{u^\epsilon} s_\eta(z-k) \, dz\right]\, \partial_t \phi \, dt\, dv_g \\
 &+\int_M \int_0^T\langle  f(u^\epsilon)- f(k), \grad_g  u^\epsilon \rangle_g  \,  s_\eta'(u^\epsilon-k) \, \phi \, dt\, dv_g\\
&+\int_M \int_0^T \langle f(u^\epsilon)- f(k), \grad_g  \phi \rangle_g  \,  s_\eta(u^\epsilon-k)  \, dt\, dv_g\\
=&\, \epsilon \, \int_M \int_0^T \, |\grad_g u^\epsilon|_g^2   \,  s_\eta'(u^\epsilon-k) \, \phi \, dt\, dv_g\\
  &+\epsilon \, \int_M \int_0^T \langle \grad_g u^\epsilon, \grad_g \phi\rangle_g  \,  s_\eta(u^\epsilon-k) \, dt\, dv_g\\
 &+ \epsilon \,  \int_{\partial M} \int_0^T N( u^\epsilon)\, s_\eta(k) \, \phi \, dt\, dv_{\widetilde g}\\
  &- \int_{\partial M} \int_0^T \langle f(0)- f(k), N \rangle_g  \,  s_\eta(k)\, \phi  \, dt\, dv_{\widetilde g}
\end{aligned}
 \label{eqex2}
 \end{equation}
where we used integration by parts, $\dv_g f(k)=0$ and the definition of $\phi$.
Since the forth line of (\ref{eqex2}) is nonnegative and Lemma \ref{lemSaks} yields 
that the second line of (\ref{eqex2}) tends to zero for $\eta \searrow 0$,
we obtain in the limit $\eta \searrow 0$:
\begin{equation}
\begin{aligned}
&\int_M \int_0^T  |u^\epsilon -k| \, \partial_t \phi \, dt\, dv_g\\
 &+\int_M \int_0^T \langle f(u^\epsilon)- f(k), \grad_g  \phi \rangle_g  \,  \sgn (u^\epsilon-k)  \, dt\, dv_g\\
\geq& \, \epsilon \, \int_M \int_0^T \langle \grad_g u^\epsilon, \grad_g \phi\rangle_g  \, \sgn(u^\epsilon-k) \, dt\, dv_g\\
 &+ \epsilon \,  \int_{\partial M} \int_0^T N( u^\epsilon) \, \sgn(k) \, \phi \, dt\, dv_{\widetilde g}\\
 &- \int_{\partial M} \int_0^T \langle f(0)- f(k), N \rangle_g  \,  \sgn(k)\, \phi  \, dt\, dv_{\widetilde g}.
\end{aligned}
\label{eqex3}
\end{equation}
Next, we consider $\epsilon \searrow 0$. 
Since the total variation of $u^\epsilon$ on $M$ is bounded uniformly in $\epsilon$
(cf. \eqref{Gronwallnabla}) 
the third line of (\ref{eqex3}) tends to zero for $\epsilon \searrow 0$.\\ 
With regard to the forth line of (\ref{eqex3}) we insert $R_\delta$, defined 
in (\ref{rhodelta}), apply the divergence theorem and use (\ref{1})
and \eqref{Gronwallnabla} in order to conclude
\begin{align*}
& \lim_{\epsilon \searrow 0} \epsilon \,  \int_{\partial M} \int_0^T N(u^\epsilon) \, \phi \, dt\, dv_{\widetilde g}\\
 =& \lim_{\epsilon \searrow 0}
\epsilon \int_M \int_0^T \Delta_g u^\epsilon \, \phi\, R_\delta+ \langle \grad_g u^\epsilon, \grad_g(\phi\, R_\delta)\rangle_g \, dt\, dv_g\\
=&\lim_{\epsilon \searrow 0}\int_M \int_0^T (\partial_t u^\epsilon + \dv_gf(u^\epsilon))
 \, \phi\, R_\delta\, dt\, dv_g\\
 =& - \int_M \int_0^T (u \, \partial_t \phi + \langle f(u), \grad_g \phi\rangle_g ) \, R_\delta \, dt \, dv_g\\
 &- \int_M \int_0^T \langle f(u), \grad_g   R_\delta\rangle_g \phi\, dt \, dv_g \\
 &+ \int_{\partial M} \int_0^T \langle f(0), N\rangle_g \, \phi \,  dt \, dv_{\widetilde g}.
\end{align*}
With the fact that the first term on the right-hand side tends to zero for $\delta \searrow 0$ 
and with Lemma \ref{lemSpurFluss} applied to the second term on the right-hand side
we conclude that \eqref{eqex3} in the limit $\epsilon\searrow0$ implies that any viscosity limit $u$ of \eqref{1}-\eqref{3}
fulfills the entropy inequalities \eqref{entrol}.
\end{proof}
\subsection{Uniqueness of the entropy solution}
To prove uniqueness we will use Kruzkov's \cite{Kruzkov} technique of doubling the variables which was generalized by
Lengeler and M\"uller \cite{MuellerLengeler} to the case of closed Riemannian manifolds. In this section we adapt their work
to the case of compact Riemannian manifolds with boundary.

We need the following Lemma from Kruzkov \cite{Kruzkov}.
\begin{lemma} \label{lemlip}
If a function $h\in C(\re)$ satisfies a Lipschitz condition on an interval $[-z,z] \subset \re $ with constant $L>0$, then the function
$q(z_1, z_2):= \sgn(z_1-z_2)(h(z_1)-h(z_2))$ satisfies the Lipschitz condition in $z_1$ and $z_2$ with the same constant $L$.
\end{lemma}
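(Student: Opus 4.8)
The plan is to exploit the symmetry $q(z_1,z_2)=q(z_2,z_1)$ and reduce the statement to a one-variable estimate: it suffices to show that for all $z_1,z_1',z_2\in[-z,z]$ one has $|q(z_1,z_2)-q(z_1',z_2)|\le L|z_1-z_1'|$. Granting this, the joint Lipschitz property on $[-z,z]\times[-z,z]$ follows by inserting the intermediate value $q(z_1',z_2)$, applying the triangle inequality, and then using the symmetry of $q$ to transfer the first-slot estimate to the second slot, which gives $|q(z_1,z_2)-q(z_1',z_2')|\le L(|z_1-z_1'|+|z_2-z_2'|)$.

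To establish the one-variable estimate I would first record the elementary identity
\begin{align*}
q(z_1,z_2)=\begin{cases} h(z_1)-h(z_2), & z_1\ge z_2,\\ h(z_2)-h(z_1), & z_1\le z_2,\end{cases}
\end{align*}
which is unambiguous at $z_1=z_2$ because there $\sgn(z_1-z_2)=0$ and simultaneously $h(z_1)-h(z_2)=0$. With this in hand I split into cases according to whether $z_2$ separates $z_1$ from $z_1'$. If $z_1$ and $z_1'$ lie on the same side of $z_2$ (both $\ge z_2$ or both $\le z_2$), then $q(z_1,z_2)-q(z_1',z_2)=\pm\big(h(z_1)-h(z_1')\big)$, so the Lipschitz bound for $h$ on $[-z,z]$ gives $|q(z_1,z_2)-q(z_1',z_2)|\le L|z_1-z_1'|$ directly. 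If instead $z_2$ lies strictly between them, say $z_1<z_2<z_1'$, then
\begin{align*}
q(z_1',z_2)-q(z_1,z_2)=\big(h(z_1')-h(z_2)\big)+\big(h(z_1)-h(z_2)\big),
\end{align*}
and the triangle inequality combined with the Lipschitz bound for $h$ on each sub-interval yields $|q(z_1',z_2)-q(z_1,z_2)|\le L(z_1'-z_2)+L(z_2-z_1)=L(z_1'-z_1)=L|z_1-z_1'|$; the remaining sub-case $z_1'<z_2<z_1$ is identical after relabelling.

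There is no genuine obstacle here. The only points requiring a little care are the behaviour of $\sgn$ at coinciding arguments, which is handled once and for all by the case identity above, and the observation that in the separating case the difference $q(z_1',z_2)-q(z_1,z_2)$ is \emph{not} simply $h(z_1')-h(z_1)$, so one must split it at the value $h(z_2)$ and control the two resulting pieces on the sub-intervals $[z_2,z_1']$ and $[z_1,z_2]$ respectively. Once the one-variable estimate is in place the lemma follows immediately.
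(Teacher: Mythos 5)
Your argument is correct, and the case analysis is complete: the identity $q(z_1,z_2)=h(\max\{z_1,z_2\})-h(\min\{z_1,z_2\})$ settles the behaviour of $\sgn$ at coinciding arguments, the splitting at $h(z_2)$ handles the only nontrivial case, and the symmetry $q(z_1,z_2)=q(z_2,z_1)$ transfers the first-slot estimate to the second, giving Lipschitz continuity in each variable with constant $L$ (and the joint bound $L(|z_1-z_1'|+|z_2-z_2'|)$). The paper gives no proof of this lemma but only cites Kruzkov, and your elementary argument is precisely the standard one behind that citation, so there is nothing further to compare.
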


\begin{theorem}[Uniqueness of the entropy solution]
The entropy solution of problem (\ref{11}), (\ref{12}), (\ref{boundarycondition})  is unique. 
\end{theorem}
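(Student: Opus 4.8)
The plan is to transfer Kruzkov's doubling-of-variables technique \cite{Kruzkov} to the present setting, relying on the closed-manifold analysis of Lengeler and M\"uller \cite{MuellerLengeler} for the interior contributions and treating the boundary terms in the spirit of Bardos, Leroux and N\'ed\'elec \cite{BardosLeroux}. Let $u,v\in\BV(M\times(0,T))\cap L^\infty(M\times(0,T))$ be two entropy solutions of \eqref{11}, \eqref{12}, \eqref{boundarycondition}; by Lemma \ref{lemProduktBV} we have $u(\cdot,t),v(\cdot,t)\in\BV(M)$ for almost every $t$, so Theorem \ref{SpursatzM} provides the traces $Tu$ and $Tv$ on $\partial M\times(0,T)$. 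First I would write the entropy inequality \eqref{entrol} for $u$ in the variables $(x,t)$ with the constant $k$ replaced by the frozen value $v(y,s)$ and a non-negative test function $\phi=\phi(x,t,y,s)\in C_0^\infty\big(\bar M\times(0,T)\times\bar M\times(0,T)\big)$, integrate over $(y,s)$, and add the symmetric inequality obtained by writing \eqref{entrol} for $v$ in $(y,s)$ with $k=u(x,t)$ and integrating over $(x,t)$. The sum is non-negative and splits into an interior part and the sum of two boundary integrals over $\partial M\times(0,T)$ carrying the factors $\sgn(v(y,s))\,\langle f(Tu)-f(v(y,s)),N\rangle_g$ and $\sgn(u(x,t))\,\langle f(Tv)-f(u(x,t)),N\rangle_g$; note that, in contrast to the closed case, $\phi$ is allowed to be non-zero on the spatial boundary.

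Next I would specialise $\phi$ to a standard mollifier concentrated near the diagonals of $\bar M\times\bar M$ and of $(0,T)^2$, multiplied by an arbitrary non-negative $\psi(x,t)\in C_0^\infty(\bar M\times(0,T))$ which need not vanish on $\partial M$; on the manifold this mollifier has to be built so that it still makes sense up to $\partial M$, which can be done with the distance function $d_g$ or with local charts glued by a partition of unity as in \cite{MuellerLengeler}. Letting the mollification parameter tend to zero, the interior part converges — using $\dv_g f=0$, the $L^\infty$-bound on $\partial_u f$, the smoothness of $f$ and the compact geometry of $\bar M$ to control the geometric remainder, exactly as in \cite{MuellerLengeler} — to
\begin{equation*}
 \int_M\int_0^T |u-v|\,\partial_t\psi+\sgn(u-v)\,\langle f(u)-f(v),\grad_g\psi\rangle_g\,dt\,dv_g,
\end{equation*}
while, since the traces are attained in the $L^1$-averaged sense of Corollary \ref{korEigenschaftenTu} and $u,v$ possess Lebesgue points almost everywhere by Lemma \ref{lemLebesgueM}, the sum of the two boundary integrals converges to
\begin{equation*}
 B:=\int_{\partial M}\int_0^T \big(\langle f(Tu),N\rangle_g-\langle f(Tv),N\rangle_g\big)\,\big(\sgn(Tv)-\sgn(Tu)\big)\,\psi\,dt\,dv_{\widetilde g}.
\end{equation*}

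The genuinely new ingredient is the inequality $B\le 0$. Abbreviating $F(r):=\langle f(r,x,t),N(x)\rangle_g$ at a fixed boundary point and time, the boundary condition \eqref{boundarycondition} for $u$ is equivalent to $\sgn(Tu)\big(F(Tu)-F(k)\big)\ge 0$ for all $k\in I(Tu,0)$, and in particular, taking $k=0$, to $\sgn(Tu)\big(F(Tu)-F(0)\big)\ge 0$; the same holds with $v$ in place of $u$. A short case distinction on the signs of $Tu$ and $Tv$ then yields $\big(\sgn(Tu)-\sgn(Tv)\big)\big(F(Tu)-F(Tv)\big)\ge 0$ almost everywhere on $\partial M\times(0,T)$: when $Tu$ and $Tv$ have the same sign the expression vanishes, and otherwise $F(Tu)$ and $F(Tv)$ lie on opposite sides of $F(0)$, with the signs of $Tu,Tv$ arranged so that the product stays non-negative. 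Hence $B\le 0$, and combining this with the interior limit we obtain
\begin{equation*}
 \int_M\int_0^T |u-v|\,\partial_t\psi+\sgn(u-v)\,\langle f(u)-f(v),\grad_g\psi\rangle_g\,dt\,dv_g\ \ge\ 0
\end{equation*}
for every non-negative $\psi\in C_0^\infty(\bar M\times(0,T))$.

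To conclude, one takes $\psi(x,t)=\alpha(t)$ with $\alpha\in C_0^\infty((0,T))$, $\alpha\ge 0$; then $\grad_g\psi\equiv 0$, the flux term drops, and only $\int_0^T\alpha'(t)\big(\int_M|u-v|(\cdot,t)\,dv_g\big)\,dt\ge 0$ survives, so no Gronwall argument is needed. Letting $\alpha$ approximate the indicator of an interval $(\tau_0,\tau)\subset(0,T)$ and sending $\tau_0\searrow 0$, the $L^1$-limit of $u(\cdot,t)-v(\cdot,t)$ as $t\searrow 0$ equals $Tu|_{t=0}-Tv|_{t=0}=u_0-u_0=0$ by Definition \ref{defiLoesungsbegriff} and Theorem \ref{SpursatzM}, whence $\int_M|u(\cdot,\tau)-v(\cdot,\tau)|\,dv_g\le 0$ for almost every $\tau\in(0,T)$, i.e. $u=v$. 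I expect the main obstacle to be the doubling step on a manifold \emph{with} boundary: constructing a mollification of the test functions that can be kept non-vanishing on $\partial M$, verifying that in the limit the interior remainder (the geometric terms together with the $x$-dependence of $f$) disappears thanks to $\dv_g f=0$ and the smoothness of $f$, and identifying the limit of the two boundary integrals as $B$ — once this is in place, the sign of $B$ falls out cleanly from the boundary condition \eqref{boundarycondition}.
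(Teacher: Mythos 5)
Your route is genuinely different from the paper's. The paper doubles variables with test functions $\psi(t)\bar\psi(x)$ compactly supported in the interior, so no boundary integrals appear during the doubling at all; the boundary flux is recovered only afterwards, by taking $\bar\psi=1-R_\delta$ and invoking Lemma \ref{lemSpurFluss}, which produces the term $\sgn(Tu-Tv)\,\langle f(Tu)-f(Tv),N\rangle_g$ on $\partial M\times(0,T)$. Its nonnegativity genuinely needs the whole family of boundary inequalities $(\sgn(Tu-k)+\sgn(k))\langle f(Tu)-f(k),N\rangle_g\geq0$ (obtained from \eqref{entrol} with $\phi=\bar\phi R_\delta$), applied at the intermediate value $\bar k$, because $\sgn(Tu-Tv)$ sees the ordering of the traces, not their signs. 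You instead keep the test function nonvanishing on $\partial M$ inside the doubling, so the boundary integrals of \eqref{entrol} survive and pair into $(\sgn(Tu)-\sgn(Tv))\langle f(Tu)-f(Tv),N\rangle_g\geq0$, and your case distinction showing that this follows from the $k=0$ instance of \eqref{boundarycondition} alone is correct. If it can be completed, your variant buys a noticeably simpler sign argument at the boundary and avoids the $\bar k$-construction.

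The gap is the step ``the sum of the two boundary integrals converges to $B$''. Two issues. Minor: for $x\in\partial M$ the mollifier $\kappa_\epsilon(x,\cdot)$ only charges the half ball $B_\epsilon^g(x)\cap M$, so the boundary terms come with an asymptotic weight (about $1/2$) different from the interior terms; harmless for the sign, but it must be tracked, as must the boundary-layer error terms in the $\grad\kappa_\epsilon$-cancellation, which you only defer to \cite{MuellerLengeler} although that reference treats closed manifolds. Serious: passing $\sgn(v(y,s))\to\sgn(Tv(x,t))$ under the concentrating mollifier fails on the set where $Tv=0$ (symmetrically where $Tu=0$). The averaged trace convergence of Corollary \ref{korEigenschaftenTu} transports Lipschitz compositions such as $f(v)$, but not the discontinuous $\sgn$; at such points the limit of the $u$-boundary contribution is only of the form $\theta\,\langle f(Tu)-f(0),N\rangle_g$ with some $|\theta|\leq1$, not $\sgn(Tv)\langle f(Tu)-f(Tv),N\rangle_g=0$, so the claimed limit $B$ is not justified there. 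The argument can be repaired — work with the $\limsup$ of the boundary terms and pair the two contributions: e.g.\ if $Tv=0$ and $Tu>0$, the $v$-term converges to $-\langle f(Tu)-f(0),N\rangle_g$, which absorbs the worst case $\theta\leq1$ of the $u$-term because $\langle f(Tu)-f(0),N\rangle_g\geq0$ by the $k=0$ condition — but this extra case analysis at degenerate traces, together with the up-to-the-boundary mollification estimates, is exactly the technical work the paper's interior-doubling-plus-$R_\delta$ construction is designed to avoid, and it is missing from your write-up.
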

\begin{proof}
We assume that there exist two entropy solutions $u$ and $v$. Using the doubling of variables technique of 
Kruzkov (cf. \cite{Kruzkov})
we consider (\ref{entrol}) with $\phi= \phi(x,t,y,s) \in C_0^\infty(M\times (0,T) \times M \times (0,T))$ 
first  for $u$ with $k=v(y,s)$ and integrate over
$M \times (0,T)$ w.r.t. $(y,s)$, and then for $v$ with $k=u(x,t)$ and integrate over $M \times (0,T)$ w.r.t. $(x,t)$.
Summation of the two inequalities yields
\begin{equation}
\begin{aligned}
&\int_0^T\int_M \int_0^T \int_M |u(x,t)-v(y,s)|(\partial_t \phi+ \partial_s \phi) \\
&+ \langle q(u(x,t), v(y,s),x,t),\grad_g^x \phi \rangle_g \\ 
&+ \langle q(v(y,s),u(x,t),y,s),\grad_g^y \phi \rangle_g\, 
dv_g(y) \,ds\, dv_g(x)\, dt   \geq 0
\end{aligned}
 \label {eqEind1}
\end{equation}
with
\begin{align*}
q(u,k,x,t):= \sgn(u-k)\left(f(u,x,t)-f(k,x,t)\right).
\end{align*}
We set
\begin{align*}
\phi(x,t,y,s):= \psi(t)\bar \psi(x)\omega_{\bar{\epsilon}}(t-s) \kappa_\epsilon(x,y),
\end{align*}
where  $\psi \in C_0^\infty((0,T))$ with $\psi\geq 0$ and $\bar \psi \in C_0^\infty( M)$ with $\bar\psi\geq 0$,
$\omega_{\bar{\epsilon}}(s):= \frac{1}{\bar{\epsilon}}\omega(\frac{s}{\bar{\epsilon}})$ 
with $\omega \in C^\infty(\re)$, $\text{supp}(\omega) \subset (-1,1)$, $\omega \geq 0$ and $\int_{\re} \omega(s) ds=1$
and $\kappa_\epsilon(x,y):= \frac{1}{\epsilon^n} \kappa \left(\frac{d_g(x,y)}{\epsilon}\right)$
with $\kappa  \in C^\infty(\re)$, $\text{supp}(\kappa) \subset (-1,1)$, $\kappa \geq 0$ and  $\int_{\re^n} \kappa(|z|) dz=1$.
Thus, (\ref{eqEind1}) yields
\begin{equation*}
\begin{aligned}
& \int_0^T \int_M\psi'(t) \bar\psi(x)  \int_0^T\int_M \omega_{\bar{\epsilon}}(t-s) \kappa_\epsilon(x,y)|u(x,t)-v(y,s)|\, dv_g(y)\,ds\,  dv_g(x)\, dt  \\
&+  \int_0^T\int_M \psi(t)  \int_0^T\int_M  \omega_{\bar{\epsilon}}(t-s)
\left[ \langle q(u(x,t), v(y,s),x,t),\grad_g^x \kappa_\epsilon (x,y) \rangle_g \bar\psi(x)\right. \\ 
&+  \langle q(u(x,t), v(y,s),x,t),\grad_g^x \bar\psi(x) \rangle_g  \kappa_\epsilon (x,y) \\
&+ \left.\langle q(v(y,s),u(x,t),y,s),\grad_g^y \kappa_\epsilon (x,y) \rangle_g \bar\psi(x)\right]\, 
dv_g(y) \,ds\,  dv_g(x) \, dt   \geq 0. 
\end{aligned}
\end{equation*}
With the same argumentation as in \cite[1714-1718]{MuellerLengeler} we obtain by subsequently letting $\bar \epsilon$ and 
$\epsilon$ tend to zero

\begin{equation*}
\begin{aligned}
\int_0^T \int_M\psi' \bar \psi & |u-v|+ \psi\, \sgn(u-v)\, 
\langle \grad_g \bar \psi , f(u)-f(v) \rangle_g dv_g \, dt \geq 0.
\end{aligned}
\end{equation*}
Setting $\bar \psi:= 1-  R_\delta$ we get
with Lemma \ref{lemSpurFluss} for $\delta \searrow 0$
 \begin{equation}
 \begin{aligned}
\int_M \int_0^T \psi' | u-v|\,dt\, dv_g\geq
  \int_{\partial M} \int_0^T \psi \sgn(Tu- Tv) 
 \left\langle f(Tu)-f(Tv), N\right\rangle_gdt\,  dv_{\widetilde g}.
\end{aligned}
\label{RSLS}
\end{equation}
Here, we used that $\sgn(u-v)(f(u)-f(v))= f(\max\{u,v\}) - f(\min\{u,v\})$ and the fact that $\max\{u,v\}$ and $\min\{u,v\}$ inherit a
bounded variation from $u$ and $v$.
In the next lines we will show that the right-hand side of (\ref{RSLS}) is nonnegative. With
\begin{align*}
 \bar k := \begin{cases}
           Tu\, &\text{ if $Tu \in I(0,Tv)$} \\
           0 \, &\text{ if $0 \in I(Tu, Tv)$ }\\
           Tv\, &\text{ if $Tv \in I(0,Tu)$},
          \end{cases}
\end{align*}
where $I(a,b):= [\min\{a,b\}, \max\{a,b\}]$ we obtain
\begin{align*}
 &\int_{\partial M} \int_0^T \psi(t)\, \sgn(Tu- Tv) \, \left\langle f(Tu)-f(Tv), N\right\rangle_g\,dt\,  dv_{\widetilde g}\\
 =&\int_{\partial M} \int_0^T \psi(t)\, \sgn(Tu- \bar k ) \, \left\langle f(Tu)-f(\bar k), N\right\rangle_g\,dt\,  dv_{\widetilde g}\\
 &+\int_{\partial M} \int_0^T \psi(t)\, \sgn(Tv- \bar k ) \, \left\langle f(Tv)-f(\bar k), N\right\rangle_g\,dt\,  dv_{\widetilde g}.\\
\end{align*}
In order to show that each summand is nonnegative we exploit inequality 
(\ref{entrol}) with $\phi= \bar \phi\, R_\delta$ for $\bar \phi \in C_0^\infty (\bar M \times (0,T))$, 
$ \bar\phi(x,t)\geq 0$  and obtain
\begin{align*}
&\int_M \int_0^T |u-k|\, \partial_t\bar \phi \, R_\delta 
+\sgn (u-k)\left\langle f(u)-f(k),\grad_g R_\delta \right\rangle_g  \bar \phi
\, dt\, dv_g\\
&+\sgn (u-k)\left\langle f(u)-f(k),\grad_g \bar \phi \right\rangle_g R_\delta
\, dt\, dv_g\\
&+\int_{\partial M} \int_0^T \sgn (k) \left\langle f(Tu)-f(k),N\right \rangle_{g}\, \bar \phi\, dt \, dv_{\widetilde g} \geq 0
\end{align*}
for all $k \in \re$. By Lemma \ref{lemSpurFluss} we have for $\delta \searrow 0$
\begin{align*}
 \int_{\partial M} \int_0^T \left(\sgn (Tu-k) +\sgn (k)\right)
 \left\langle f(Tu)-f(k),N\right\rangle_{g}\, \bar \phi\,  dt \, dv_{\widetilde g}
 \geq 0
\end{align*}
for all $k \in \re$. Since $\bar \phi$ was arbitrary, obviously 
\begin{align*}
 \left(\sgn (Tu-k) +\sgn (k)\right)
 \left\langle f(Tu)-f(k),N\right\rangle_{g}\geq 0
\end{align*}
almost everywhere on $\partial M\times(0,T)$. 
Using the fact that 
\begin{align*}
 \sgn(Tu-\bar k) = \begin{cases}
           0\, &\text{ if } \bar k = Tu                             ,   \\
           \sgn(Tu-\bar k) + \sgn(\bar k)  \, &\text{ if } \bar k=0 , \\
           \frac{1}{2}(\sgn(Tu-\bar k) + \sgn(\bar k)) \, &\text{ if }\bar k=Tv,
          \end{cases}
\end{align*}
we see, after a repetition of the argumentation for $v$, that the right-hand side of (\ref{RSLS}) is nonnegative and consequently
\begin{align}
  \int_M \int_0^T \psi'(t)\, | u-v|\, dt\, dv_g \geq 0.\label{LSt}
 \end{align}
 Let $\Psi$ denote the characteristic function of an arbitrary time interval $[t_0, t_1] \subset (0,T)$ 
 and $\psi_\epsilon= \Psi \ast \omega_\epsilon$ its mollification.
 For $\psi= \psi_\epsilon$ in \eqref{LSt} we obtain as $\epsilon$ tends to zero
\begin{align}\label{eq:contraction}
 \| u(\cdot,t_1)- v(\cdot,t_1)\|_{L^1(M)} \leq \| u(\cdot,t_0)- v(\cdot,t_0)\|_{L^1(M)}. 
\end{align}
Letting $t_0$ tend to zero we obtain uniqueness.
\end{proof}
\begin{corollary}[$L^1$-contraction property]
 Let $u,v$ be two entropy solutions of problem (\ref{11}), (\ref{12}), (\ref{boundarycondition}), then \eqref{eq:contraction}
holds for $0\leq t_0\leq t_1$.
\end{corollary}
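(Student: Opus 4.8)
The plan is to extract \eqref{eq:contraction} from the proof of the preceding uniqueness theorem and then to lower the initial endpoint down to $t_0=0$ by a limiting argument at the first time slice. First I would note that the chain of estimates in that proof --- the doubling of variables leading to \eqref{LSt}, followed by mollifying the characteristic function $\Psi$ of $[t_0,t_1]$ to $\psi_\epsilon=\Psi\ast\omega_\epsilon$ and letting $\epsilon\searrow0$ --- at no point used that $u$ and $v$ carry the same initial datum; it used only that both are entropy solutions in the sense of Definition \ref{defiLoesungsbegriff}, the boundary condition \eqref{boundarycondition} entering solely through the nonnegativity of the right-hand side of \eqref{RSLS}. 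Consequently \eqref{eq:contraction} is already available for every $[t_0,t_1]$ with $0<t_0\le t_1<T$, and only the endpoint $t_0=0$ remains to be treated.

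For that I would invoke Corollary \ref{corAnfangsbed}: both solutions lie in $\BV(M\times(0,T))\cap L^\infty(M\times(0,T))$, so, viewing $M\times(0,T)$ as a manifold with boundary equipped with $g_T$, Theorem \ref{SpursatzM} furnishes traces $Tu|_{t=0}$ and $Tv|_{t=0}$ on the slice $M\times\{0\}$, which by Definition \ref{defiLoesungsbegriff} agree a.e.\ with the prescribed initial values. The key fact I would then use is that these traces are attained in the $L^1(M)$-sense, i.e.
\[
\lim_{t\searrow0}\norm{u(\cdot,t)-Tu|_{t=0}}_{L^1(M)}=0\quad\text{and}\quad\lim_{t\searrow0}\norm{v(\cdot,t)-Tv|_{t=0}}_{L^1(M)}=0,
\]
which is precisely the $L^1$-convergence feature of the trace operator of Section \ref{sec:traces} applied in the time direction, Lemma \ref{lemProduktBV} guaranteeing that the slices $u(\cdot,t)$, $v(\cdot,t)$ are meaningful elements of $\BV(M)$.

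Finally I would fix $0<t_0'\le t_1<T$, apply \eqref{eq:contraction} on $[t_0',t_1]$, and insert $\pm Tu|_{t=0}$, $\pm Tv|_{t=0}$ to bound $\norm{u(\cdot,t_0')-v(\cdot,t_0')}_{L^1(M)}$ by $\norm{u(\cdot,t_0')-Tu|_{t=0}}_{L^1(M)}+\norm{Tu|_{t=0}-Tv|_{t=0}}_{L^1(M)}+\norm{Tv|_{t=0}-v(\cdot,t_0')}_{L^1(M)}$; letting $t_0'\searrow0$ the first and third terms vanish and one is left with $\norm{u(\cdot,t_1)-v(\cdot,t_1)}_{L^1(M)}\le\norm{Tu|_{t=0}-Tv|_{t=0}}_{L^1(M)}$, which is \eqref{eq:contraction} at $t_0=0$ under the convention --- the same one under which the initial condition is imposed --- that $u(\cdot,0)$ and $v(\cdot,0)$ denote the initial traces. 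Combined with the interior case this yields the claim for all $0\le t_0\le t_1$. I expect the only genuinely delicate point to be the $L^1$-attainment of the trace at $t=0$: one must check that the slicing of Lemma \ref{lemProduktBV} and the trace construction of Theorem \ref{SpursatzM} fit together near the space-time corner $\partial M\times\{0\}$, whereas away from that corner it reduces to the one-dimensional statement that a $BV$ function of $t$ has one-sided limits attained in $L^1$ along the slices.
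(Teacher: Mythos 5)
Your proposal is correct and coincides with the paper's (implicit) argument: the paper gives no separate proof, the corollary being read off from the uniqueness proof exactly as you do, since \eqref{RSLS}--\eqref{LSt} and the mollified-characteristic-function step never use that $u$ and $v$ share initial data. Your treatment of the endpoint $t_0=0$ via $L^1$-attainment of the time trace is the same mechanism the paper uses implicitly (cf.\ Corollary \ref{corAnfangsbed} and the final limit $t_0\searrow0$ in the uniqueness proof), so no further comment is needed.
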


\bibliographystyle{plain}      
\bibliography{bibliography}

\end{document}